\definecolor{darkblue}{rgb}{0.0, 0.0, 0.55}
\definecolor{darkmagenta}{rgb}{0.55, 0.0, 0.55}
\definecolor{darkcyan}{rgb}{0.0, 0.55, 0.55}
\newcommand{\calF}{\mathcal{F}}
\newcommand{\calV}{\mathcal{V}}
\newcommand{\NN}{\mathbb{N}}
\newcommand{\ZZ}{\mathbb{Z}}
\newcommand{\PP}{\mathbb{P}}
\def\opn#1#2{\def#1{\operatorname{#2}}} 
\opn\rank{rank} \opn\mnull{null} \opn\Iso{Iso} \opn\Sw{Sw}
\opn\type{type} \opn\outdeg{outdeg} \opn\indeg{indeg} 
\opn\Hom{Hom}
\def\GrMod{\operatorname{\mathsf{GrMod}}}
\def\GrAut{\operatorname{GrAut}}
\def\M{\operatorname{M}}
\def\Alt{\operatorname{Alt}}
\def\Im{\operatorname{Im}}
\def\Ker{\operatorname{Ker}}
\def\rnum#1{\expandafter{\romannumeral #1}}
\def\Rnum#1{\uppercase\expandafter{\romannumeral #1}}
\newcommand{\longsquiggly}{\xymatrix{{}\ar@{~>}[r]&{}}}
\newtheorem{thm}{Theorem}[section]
\newtheorem{cor}[thm]{Corollary}
\newtheorem{lem}[thm]{Lemma}
\newtheorem{prop}[thm]{Proposition}
\theoremstyle{definition}
\newtheorem{dfn}[thm]{Definition}
\newtheorem{ex}[thm]{Example}
\newtheorem{rem}[thm]{Remark}
\begin{document}

\title [Graded module categories over skew polynomial algebras at roots of unity]
{Combinatorics of graded module categories over skew polynomial algebras at roots of unity}

\author{Akihiro Higashitani}
\address{Department of Pure and Applied Mathematics, 
Graduate School of Information Science and Technology, 
Osaka University, 
1-5, Yamadaoka, Suita, Osaka 565-0871, Japan}
\email{higashitani@ist.osaka-u.ac.jp}

\author{Kenta Ueyama}
\address{Department of Mathematics,
Faculty of Science,
Shinshu University,
3-1-1 Asahi, Matsumoto, Nagano 390-8621, Japan}
\email{ueyama@shinshu-u.ac.jp}

\keywords{graded module category, skew polynomial algebra, root of unity, switching of matrices, modular Eulerian matrix, simplicial complex}

\subjclass[2020]{16W50, 16S38, 05C50, 05E45}
\begin{abstract}
We introduce an operation on skew-symmetric matrices over $\mathbb{Z}/\ell\mathbb{Z}$ called switching, and also define a class of skew-symmetric matrices over $\mathbb{Z}/\ell\mathbb{Z}$ referred to as modular Eulerian matrices. We then show that these are closely related to the graded module categories over skew polynomial algebras at $\ell$-th roots of unity.
As an application, we study the point simplicial complexes of skew polynomial algebras at cube roots of unity.
\end{abstract}

\maketitle 

\section{Introduction}
Classifying noetherian Koszul Artin-Schelter regular algebras up to equivalence of graded module categories is an important subject in noncommutative algebraic geometry. This is because it is closely related to classifying noncommutative projective spaces (see, e.g., \cite{AOU, AZ, ATV2, CGt, CG, IM, Ma, Mss, MU, NVZ, Si, Vi}). 
In this paper, we focus on a special class of Koszul AS-regular algebras referred to as \emph{standard graded skew polynomial algebras at roots of unity}, and we investigate the equivalence classes of their graded module categories using combinatorial methods. 

The \emph{point variety} $\Gamma_S$ of a skew polynomial algebra $S$ is known as a crucial

 geometric invariant under the equivalence of the graded module categories (see, e.g., \cite{ATV, BDL, Mss, R, Vi}).
Using the fact that the point variety $\Gamma_S$ of a skew polynomial algebra $S$ in $n$ variables is a union of linear subspaces of $\PP^{n-1}$, we can define a simplicial complex $\Delta_S$, which we call the \emph{point simplicial complex} of $S$, as a combinatorial aspect of $\Gamma_S$.

In previous work \cite{HU}, the authors studied \emph{standard graded ($\pm 1$)-skew polynomial algebras}, or in other words, standard graded skew polynomial algebras at square roots of unity.
They associated a graph $G_S$ with each standard graded ($\pm 1$)-skew polynomial algebra $S$, and proved the following theorem.

\begin{thm}[{\cite[Theorem 4.2]{HU}}]\label{thm:pm1}
Let $S$ and $S'$ be standard graded ($\pm 1$)-skew polynomial algebras (i.e., standard graded skew polynomial algebras at square roots of unity).
Then the following are equivalent.
\begin{enumerate}
\renewcommand{\labelenumi}{(\roman{enumi})}
\item The categories of graded right modules over $S$ and $S'$ are equivalent.
\item The graphs $G_{S}$ and $G_{S'}$ are switching equivalent.
\item The point simplicial complexes $\Delta_{S}$ and $\Delta_{S'}$ are isomorphic.
\end{enumerate}
\end{thm}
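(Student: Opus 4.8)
The plan is to set up a dictionary between the algebra $S$, encoded by its parameter matrix $(q_{ij})$ with $q_{ij}=(-1)^{a_{ij}}$, and the graph $G_S$ with $\ZZ/2\ZZ$-valued adjacency matrix $(a_{ij})$, and then to close two loops around condition (ii): the algebraic loop $(ii)\Rightarrow(i)\Rightarrow(ii)$ and the combinatorial loop $(ii)\Rightarrow(iii)\Rightarrow(ii)$. The unifying observation I would isolate first is that \emph{Seidel switching of $G_S$ corresponds exactly to a diagonal $(\pm1)$-Zhang twist of $S$}, so that switching equivalence becomes the algebraic operation preserving $\GrMod S$.

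For $(i)\Leftrightarrow(ii)$ the central tool is Zhang's theorem: for connected graded algebras finitely generated in degree one, $\GrMod S\simeq\GrMod S'$ holds if and only if $S'$ is isomorphic to a Zhang twist $S^{\tau}$ of $S$ by some twisting system $\tau$. The easy direction is the computation of a diagonal twist: twisting $S$ by $\sigma(x_i)=\mu_i x_i$ with $\mu_i=(-1)^{u_i}$ yields again a $(\pm1)$-skew polynomial algebra whose defining matrix undergoes $a_{ij}\mapsto a_{ij}+u_i+u_j\pmod 2$, which is precisely switching of $G_S$ along $U=\{i:u_i=1\}$. Composing switches, and absorbing any relabeling of the variables into a graph isomorphism, gives $(ii)\Rightarrow(i)$. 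For $(i)\Rightarrow(ii)$ I would invoke the converse half of Zhang's theorem to present the equivalence as a twist, and then argue that any twisting system for which $S^{\tau}$ is again a $(\pm1)$-skew polynomial algebra can be normalized, after composing with a graded algebra isomorphism of $S$, to a diagonal twist with scalars in $\{\pm1\}$, hence to a switching together with a relabeling.

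For $(ii)\Leftrightarrow(iii)$ I would first pin down the combinatorics of $\Delta_S$. Since $\Gamma_S$ is a union of coordinate linear subspaces, a subset $F\subseteq\{1,\dots,n\}$ should span a face of $\Delta_S$ exactly when $\PP(\langle e_i:i\in F\rangle)\subseteq\Gamma_S$, which for skew polynomial algebras reduces to the cocycle condition $a_{ik}\equiv a_{ij}+a_{jk}\pmod 2$ for all $i,j,k\in F$; equivalently, $G_S[F]$ is switching-equivalent to the edgeless graph (i.e.\ complete bipartite). This property is manifestly a switching invariant, because switching $G_S$ induces a switching on each $G_S[F]$ and preserves the class of the edgeless graph, giving $(ii)\Rightarrow(iii)$. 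Conversely, fixing a base vertex shows the cocycle condition is already detected on triples, so $F$ is a face precisely when every $3$-subset of $F$ is; thus $\Delta_S$ is determined by which triples are faces, and the \emph{non}-faces among triples are exactly those spanning an odd number of edges of $G_S$, i.e.\ they form the two-graph of $G_S$. Since Seidel's classical correspondence identifies two-graphs with switching classes, an isomorphism $\Delta_S\cong\Delta_{S'}$ recovers the two-graph of $G_S$, hence the switching class up to relabeling, yielding $(iii)\Rightarrow(ii)$.

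The main obstacle is the normalization step inside $(i)\Rightarrow(ii)$. Zhang's theorem only produces an abstract twisting system, whereas I need a diagonal $(\pm1)$-automorphism; the difficulty is aggravated by the fact that when many $q_{ij}$ equal $1$ the algebra $S$ has a large graded automorphism group (in the commutative extreme it is all of $\mathrm{GL}_n$), so the twisting system is far from unique and must be pinned down using the requirement that the twist stays monomial. Showing that this constraint forces a diagonal $\pm1$-twist up to graded isomorphism is the technical heart of the argument, and it is here that the restriction to square roots of unity, which makes the relevant scalars lie in $\{\pm1\}$ and the matrix $\ZZ/2\ZZ$-valued, is genuinely used. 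One partial shortcut is that the point scheme is itself an invariant of $\GrMod S$, which would give $(i)\Rightarrow(iii)$ directly; but one must then still check that $\Delta_S$ is recoverable from $\Gamma_S$ up to isomorphism, returning to the same combinatorial bookkeeping. By contrast $(ii)\Leftrightarrow(iii)$ is comparatively routine once the two-graph interpretation of $\Delta_S$ is in place, resting on the classical theory of two-graphs.
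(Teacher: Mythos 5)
Your proposal has the right architecture, and it essentially coincides with the machinery this paper builds around the theorem (note that the paper does not reprove the statement itself --- it cites \cite[Theorem 4.2]{HU} --- but Sections 2 and 3 generalize it, so the comparison below is with that machinery). Your direction (ii)$\Rightarrow$(i), realizing switching by diagonal $\pm 1$ Zhang twists and relabelings by algebra isomorphisms, is exactly Lemma~\ref{lem.swgen} together with Proposition~\ref{prop.skewiso}. For (i)$\Rightarrow$(ii), however, the normalization you single out as the ``technical heart'' --- upgrading an abstract twisting system from Zhang's theorem to a diagonal twist composed with a permutation --- is precisely \cite[Proposition 2.3]{Vi}, which both this paper (proof of Theorem~\ref{thm.grmodgen}, direction (1)$\Rightarrow$(2)) and \cite{HU} cite rather than reprove. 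So your plan is the correct one, but as written this step is a declared gap: the remark that the twist must ``stay monomial'' is not an argument, and a self-contained proof would have to either cite Vitoria or carry out that analysis. One small correction to your formulation: Vitoria's conclusion is $\alpha'_{\sigma(i)\sigma(j)}=\lambda_i^{-1}\lambda_j\alpha_{ij}$ with $\lambda_i\in k^{\times}$ arbitrary; for $\ell=2$ this forces only the ratios $\lambda_i^{-1}\lambda_j$ to lie in $\{\pm 1\}$ (i.e.\ all $\lambda_i$ lie in a single coset of $\{\pm1\}$), not the $\lambda_i$ themselves --- harmless, since the induced switching depends only on the ratios.

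Your treatment of (ii)$\Leftrightarrow$(iii) is correct and complete modulo the classical Seidel correspondence: faces of $\Delta_S$ are detected on triples, the non-face triples are exactly those spanning an odd number of edges, i.e.\ the two-graph of $G_S$, and two-graphs biject with switching classes (the references \cite{vLS}, \cite{S}, \cite{MS} that this paper cites). This is a genuinely different route from what the paper's own machinery suggests, and it is worth comparing. Condition (3) of Theorem~\ref{thm.grmodgen} --- the existence of a permutation matching all triple products $\alpha_{ij}\alpha_{jh}\alpha_{hi}$ --- is equivalent to the category equivalence for \emph{every} $\ell$; when $\ell=2$ each triple product lies in $\{\pm1\}$, so it is determined by whether or not it equals $1$, and condition (3) becomes literally the existence of a bijection preserving face-triples, i.e.\ $\Delta_S\cong\Delta_{S'}$. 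This gives (i)$\Leftrightarrow$(iii) for $\ell=2$ without invoking two-graph theory, and it isolates exactly why the implication (iii)$\Rightarrow$(i) breaks down for $\ell\geq 3$ (Example~\ref{ex:swsc}, Example~\ref{ex.simpcomp}(1)): distinct nontrivial triple products collapse to the same non-face of the simplicial complex. Your two-graph argument buys contact with classical combinatorics; the triple-product criterion buys uniformity in $\ell$ and makes the failure beyond $\ell=2$ transparent.
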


Now let us explain our main results. Let $\Alt_n(\ZZ/\ell \ZZ)$ be the set of $n\times n$ skew-symmetric matrices over $\ZZ/\ell\ZZ$.
In this study, we associate a skew-symmetric matrix $M_{S} \in \Alt_n(\ZZ/\ell \ZZ)$ with each standard graded skew polynomial algebra $S$ at $\ell$-th roots of unity. Moreover, by introducing an operation on matrices in $\Alt_n(\ZZ/\ell \ZZ)$, called \emph{switching}, we show the following combinatorial result, which recovers $\textnormal{(\rnum{1})}\Leftrightarrow \textnormal{(\rnum{2})}$ of Theorem \ref{thm:pm1}
when $\ell=2$.

\begin{thm}[Theorem \ref{thm.grmodl}]\label{thm:introsw}
Let $S$ and $S'$ be standard graded skew polynomial algebras at $\ell$-th roots of unity.
Then the following are equivalent.
\begin{enumerate}
\item The categories of graded right modules over $S$ and $S'$ are equivalent.
\item The matrices $M_{S}$ and $M_{S'}$ are switching equivalent.
\end{enumerate}
\end{thm}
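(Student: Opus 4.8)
The plan is to reduce the statement to Zhang's twisting theorem, which characterizes equivalences of graded module categories in terms of twisting systems, and then to match the algebraic operation of twisting with the combinatorial operation of switching on $\Alt_n(\ZZ/\ell\ZZ)$. Write $S=\kk\langle x_1,\dots,x_n\rangle/(x_jx_i-\zeta^{m_{ij}}x_ix_j)$ for a fixed primitive $\ell$-th root of unity $\zeta$, so that $M_S=(m_{ij})\in\Alt_n(\ZZ/\ell\ZZ)$, and similarly for $S'$ with exponents $m'_{ij}$. The heart of the argument is the observation that a diagonal twist of $S$ rescales the defining parameters by a coboundary, $q_{ij}\mapsto q_{ij}\,\mu_i\mu_j^{-1}$, and that permuting the variables permutes the rows and columns of $M_S$ simultaneously; these two moves are precisely the generators of switching equivalence, specializing to ordinary Seidel switching of graphs when $\ell=2$.

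For (2)$\Rightarrow$(1), suppose $M_{S'}$ is obtained from $M_S$ by switching. After relabelling the variables (which is realized by a graded algebra isomorphism and changes $M_S$ by a simultaneous row/column permutation), we may assume $m'_{ij}=m_{ij}+c_i-c_j$ in $\ZZ/\ell\ZZ$ for some $c=(c_1,\dots,c_n)$. Set $\mu_i=\zeta^{c_i}$, let $\phi$ be the graded automorphism of $S$ scaling $x_i\mapsto\mu_i x_i$, and define the twisting system $\tau=\{\tau_k\}_{k\in\ZZ}$ by $\tau_k(x_i)=\mu_i^{\,k}x_i$. A direct check on the defining relations shows that the Zhang twist $S^{\tau}$ is isomorphic as a graded algebra to the skew polynomial algebra with parameters $\zeta^{m_{ij}+c_i-c_j}$, that is, to $S'$. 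Since $\GrMod(S)\simeq\GrMod(S^{\tau})$ by Zhang's theorem, we conclude $\GrMod(S)\simeq\GrMod(S')$.

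For (1)$\Rightarrow$(2), Zhang's theorem gives a twisting system $\tau$ with $S'\cong S^{\tau}$ as graded algebras. Because $S$ is connected graded and generated in degree one, $\tau$ may be taken to be algebraic, i.e. $\tau_k=\phi^k$ for a single graded automorphism $\phi$ of $S$, so that the equivalence is induced by $\phi$ together with a graded isomorphism $S^{\tau}\xrightarrow{\sim}S'$. It then remains to show that such a twist alters $M_S$ only by a coboundary and a permutation. Writing $\phi$ in the basis $x_1,\dots,x_n$ and comparing the relations $x_jx_i=\zeta^{m_{ij}}x_ix_j$ with those of $S'$ forces $\phi$, after absorbing the permutation coming from the isomorphism, to be diagonal, say $\phi(x_i)=\mu_i x_i$; the scalars $\mu_i$ are then $\ell$-th roots of unity, and writing $\mu_i=\zeta^{c_i}$ yields $m'_{ij}=m_{ij}+c_i-c_j$, which is exactly switching.

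The main obstacle is the final reduction in (1)$\Rightarrow$(2): controlling all graded automorphisms and twisting systems of a skew polynomial algebra at $\ell$-th roots of unity. The delicate case is that of degenerate parameters — when some $q_{ij}=1$ the corresponding variables commute and $S$ acquires extra, non-diagonal graded automorphisms mixing those variables, so $\phi$ need not be diagonal a priori. One must verify that such automorphisms preserve $M_S$ up to switching: variables that can be linearly mixed while respecting all relations necessarily have identical relations with every other generator, so mixing them amounts combinatorially to permuting indistinguishable rows and columns of $M_S$ and hence does not enlarge the switching class. Carrying out this case analysis, and confirming that no twisting system can change the parameters beyond a coboundary, is where the real work lies.
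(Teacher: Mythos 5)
Your direction $(2)\Rightarrow(1)$ is correct and is essentially the paper's own argument (Proposition~\ref{prop.skewiso} together with Lemma~\ref{lem.swgen}): each switching move is realized by a Zhang twist along the diagonal graded automorphism $x_v\mapsto \zeta_\ell x_v$, $x_i\mapsto x_i$ for $i\neq v$, and the relabelling by a graded algebra isomorphism.

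The genuine gap is in $(1)\Rightarrow(2)$, at the sentence ``Because $S$ is connected graded and generated in degree one, $\tau$ may be taken to be algebraic, i.e.\ $\tau_k=\phi^k$ for a single graded automorphism $\phi$.'' Zhang's theorem produces only an abstract twisting system $\{\tau_k\}$: its members are graded $k$-linear bijections satisfying the compatibility $\tau_n(a\tau_m(b))=\tau_n(a)\tau_{n+m}(b)$; they need not be algebra automorphisms, let alone powers of a single one, and whether a twisting system can be replaced by an algebraic one is a nontrivial problem in its own right, not a formal consequence of connectedness and generation in degree one. Since your diagonalization of $\phi$, the absorption of the permutation, and the treatment of the degenerate commuting-variables case all sit on top of this claim, the hard half of the theorem remains unproved. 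This is exactly the content the paper does not reprove either: it quotes Vitoria's result \cite[Proposition 2.3]{Vi} (stated as $(1)\Rightarrow(2)$ of Theorem~\ref{thm.grmodgen}), namely that a graded equivalence forces $\omega'_{\sigma(i)\sigma(j)}=\lambda_i^{-1}\lambda_j\omega_{ij}$ for some $\sigma\in\mathfrak{S}_n$ and scalars $\lambda_i\in k^{\times}$, and then supplies the root-of-unity refinement combinatorially in the proof of $(3)\Rightarrow(4)$ of Theorem~\ref{thm.grmodgen}: one may re-choose $\lambda_i:=\omega'_{\sigma(1)\sigma(i)}/\omega_{1i}$, which are automatically powers of $\zeta_\ell$, whence switching equivalence of the E-matrices. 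That step also repairs a smaller slip in your write-up: the diagonal scalars $\mu_i$ themselves need not be $\ell$-th roots of unity, only the ratios $\mu_i^{-1}\mu_j=\omega'_{\sigma(i)\sigma(j)}/\omega_{ij}$ are, and it is the ratios that produce the integers $c_i$. If you want to keep your architecture rather than cite \cite{Vi}, the correct reduction is not to an algebraic twisting system but to the degree-one component of $\tau_1$ alone: Zhang twisting preserves Koszulity, so $S^{\tau}$ is quadratic with relation space $(1\otimes g^{-1})(R_{\omega})$ where $g=\tau_1|_{S_1}\in GL_n(k)$, and the remaining linear-algebra analysis of which $g$ carry one skew-polynomial relation space onto another (including your degenerate case) is precisely the real work carried out in \cite{Vi}.
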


It is known that the switching equivalence classes of graphs are deeply related to the isomorphism classes of Eulerian graphs, i.e., unlabeled Eulerian graphs (see \cite{MS, S}). As a generalization of Eulerian graphs, 
we introduce the notion of a matrix in $\Alt_n(\ZZ/\ell\ZZ)$ being a \emph{modular Eulerian matrix}, and show the following theorem.

\begin{thm}[{Theorem \ref{thm.Eulermodl}}]\label{thm:introEuler}
If $n$ is coprime to $\ell$, then each switching equivalence class in $\Alt_n(\ZZ/\ell \ZZ)$ contains exactly one isomorphism class of modular Eulerian matrices.
\end{thm}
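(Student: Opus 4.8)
The plan is to reduce the whole statement to a single linear computation over the ring $\ZZ/\ell\ZZ$ that records how the \emph{degree vector} of a matrix changes under switching. For $M=(m_{ij})\in\Alt_n(\ZZ/\ell\ZZ)$ write $r_i=\sum_{j}m_{ij}$ for its $i$-th row sum and $\mathbf{r}(M)=(r_1,\dots,r_n)$; the modular Eulerian condition is exactly that $\mathbf{r}(M)=0$. Since $M$ is alternating, one always has $\sum_i r_i=0$. The key identity I would establish is that switching $M$ by a vector $\mathbf{a}=(a_1,\dots,a_n)\in(\ZZ/\ell\ZZ)^n$ (which adds $a_i-a_j$ to the $(i,j)$-entry; the opposite sign convention merely relabels $\mathbf{a}\mapsto-\mathbf{a}$ and changes nothing) transforms the degree vector by
\[
r_i \longmapsto r_i + n\,a_i - A,\qquad A:=\textstyle\sum_j a_j .
\]
Everything then follows from this identity together with the hypothesis $\gcd(n,\ell)=1$, which makes $n$ a unit in $\ZZ/\ell\ZZ$.

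For existence, given any $M$ I would switch by $a_i:=-n^{-1}r_i$, which is legitimate because $n$ is a unit. A short check gives $A=-n^{-1}\sum_i r_i=0$, so the new degree vector is $r_i+n(-n^{-1}r_i)-0=0$; that is, the switched matrix is modular Eulerian. Hence every switching equivalence class contains at least one modular Eulerian matrix, using only an additive switching (no permutation).

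For uniqueness up to isomorphism, the first step is to observe that a permutation conjugate of a switching is again a switching: conjugating by the permutation matrix $P_\pi$ carries the switch by $\mathbf{a}$ to the switch by the permuted vector $\pi\cdot\mathbf{a}$. Thus the group generated by switchings and permutations is a semidirect product, and every member of the switching class of $M$ can be put in the normal form $\mathrm{switch}_{\mathbf{b}}(QMQ^{\mathsf T})$ for some permutation $Q$ and vector $\mathbf{b}$, i.e.\ a single additive switching applied to a relabelled copy of $M$. Now suppose $M'$ is modular Eulerian and lies in the class of $M$, and write $M'=\mathrm{switch}_{\mathbf{b}}(QMQ^{\mathsf T})$. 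The matrix $QMQ^{\mathsf T}$ is again modular Eulerian, since a permutation only permutes the entries of the degree vector, so both it and $M'$ have zero degree vector. Feeding this into the key identity gives $n\,b_i-B=0$ for all $i$, whence $b_i=n^{-1}B$ is independent of $i$; but switching by a constant vector is the identity operation (the contributions $a_i-a_j$ cancel). Therefore $M'=QMQ^{\mathsf T}$, so $M'$ is isomorphic to $M$, and all modular Eulerian matrices in the class form a single isomorphism class.

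The two displayed computations are routine; the step I expect to require the most care is the reduction in the uniqueness argument, namely verifying that switchings and permutations generate a semidirect product so that all permutations may be collected to one side, leaving two modular Eulerian matrices differing by a \emph{single} additive switching. Once that normal form is in place, the hypothesis $\gcd(n,\ell)=1$ is exactly what forces the residual switching vector to be constant (hence trivial), which is the crux of uniqueness; the same hypothesis is what makes the existence switching solvable. It is worth double-checking the boundary behaviour when $\gcd(n,\ell)>1$, where $n$ fails to be a unit and both arguments genuinely break down, confirming that the coprimality hypothesis cannot be dropped.
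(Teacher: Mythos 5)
Your proposal is correct and takes essentially the same approach as the paper: existence is obtained by switching each vertex by $n^{-1}$ times its row sum (your $a_i=-n^{-1}r_i$ is, up to the sign convention, exactly the paper's $\mu_v^{sk}$ for $v\in U_k$ with $sn\equiv 1 \pmod{\ell}$), and uniqueness by showing that a switching vector joining two modular Eulerian matrices must satisfy a linear system (the paper's $(J-nI)\mathbf{a}=\mathbf{0}$, your $nb_i-B=0$) forcing it to be constant, hence to act trivially. The one place you are more explicit than the paper is the normal form $\mathrm{switch}_{\mathbf{b}}(QMQ^{\mathsf T})$ obtained by conjugating switchings past permutations; the paper absorbs the permutation implicitly, so this is a welcome clarification rather than a new idea.
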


As an application of Theorems \ref{thm:introsw} and \ref{thm:introEuler}, we give the following result on skew polynomial algebras at cube roots of unity; compare with 
Theorem \ref{thm:pm1} (which is for $\ell=2$) and Example \ref{ex:swsc} (which is for $\ell\geq 4$).

\begin{thm}[{Theorem \ref{thm.classps}, Example \ref{ex.simpcomp}(1)}]\label{thm:introps}
Let $S$ and $S'$ be standard graded skew polynomial algebras in $n$ variables at cube roots of unity.
If $n \leq 5$, then the following are equivalent.
\begin{enumerate}
\item The categories of graded right modules over $S$ and $S'$ are equivalent.
\item The point simplicial complexes $\Delta_{S}$ and $\Delta_{S'}$ are isomorphic.
\end{enumerate}
If $n=6$, then there exists a counterexample to $(2) \Rightarrow (1)$.
\end{thm}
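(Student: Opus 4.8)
The plan is to route everything through Theorem \ref{thm:introsw}, which (for $\ell=3$) identifies condition (1) with the switching equivalence of the associated matrices $M_S, M_{S'}\in\Alt_n(\ZZ/3\ZZ)$. Thus the task reduces to comparing the switching class of $M_S$ with the point simplicial complex $\Delta_S$. The first thing I would record is the combinatorial reading of $\Delta_S$ off $M_S$: a subset $F\subseteq\{1,\dots,n\}$ is a face of $\Delta_S$ exactly when the coordinate subspace it spans lies in $\Gamma_S$, which the structure of $\Gamma_S$ translates into a coherence condition on the principal submatrix $M_S[F]$. Granting this description, the implication $(1)\Rightarrow(2)$ is the easy direction and holds for every $n$: switching together with a relabeling of indices carries $M_S$ to a matrix with an isomorphic face structure (the face condition on each $M_S[F]$ is preserved by switching), so switching-equivalent matrices have isomorphic point simplicial complexes. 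Hence the entire content of the theorem lies in $(2)\Rightarrow(1)$ for $n\le 5$, together with its failure at $n=6$.

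For $(2)\Rightarrow(1)$ when $n$ is coprime to $3$, i.e.\ $n\in\{1,2,4,5\}$, I would use Theorem \ref{thm:introEuler} to replace each switching class by its unique modular Eulerian representative. Switching equivalence of $M_S$ and $M_{S'}$ then becomes the isomorphism (equality up to permutation) of their modular Eulerian normal forms $N_S$ and $N_{S'}$. It therefore suffices to prove that, among modular Eulerian matrices of size $n\le 5$ over $\ZZ/3\ZZ$, the matrix is determined up to isomorphism by its point simplicial complex. This is a finite verification: I would enumerate the isomorphism classes of modular Eulerian matrices in $\Alt_n(\ZZ/3\ZZ)$ for each such $n$, compute the corresponding $\Delta$, and check that the assignment (modular Eulerian class) $\mapsto$ (isomorphism class of $\Delta$) is injective. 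Combined with Theorem \ref{thm:introEuler} and the reduction above, injectivity gives $\Delta_S\cong\Delta_{S'}\Rightarrow N_S\cong N_{S'}\Rightarrow M_S\sim M_{S'}\Rightarrow(1)$.

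The remaining small case $n=3$ is not covered by Theorem \ref{thm:introEuler}, since $\gcd(3,3)\ne 1$ and the modular Eulerian representative need not be unique. Here I would argue directly: $\Alt_3(\ZZ/3\ZZ)$ has only $27$ matrices and very few switching classes, so I would list the switching classes outright, compute $\Delta_S$ for each, and confirm by inspection that distinct switching classes yield non-isomorphic complexes. This closes $(2)\Rightarrow(1)$ for all $n\le 5$.

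Finally, for $n=6$ I would produce an explicit pair $M_S,M_{S'}\in\Alt_6(\ZZ/3\ZZ)$ with $\Delta_S\cong\Delta_{S'}$ but $M_S\not\sim M_{S'}$; by Theorem \ref{thm:introsw} the two algebras are then not graded-equivalent although their point simplicial complexes coincide. The conceptual reason such a pair should exist is precisely that $6$ is \emph{not} coprime to $3$, so a single switching class may contain several non-isomorphic modular Eulerian matrices, and two different switching classes may nonetheless share a point simplicial complex. To certify the example I would verify $\Delta_S\cong\Delta_{S'}$ by direct computation of the two complexes, and verify $M_S\not\sim M_{S'}$ by exhibiting a switching invariant that separates them. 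I expect the main obstacle to be twofold: carrying out the finite classification of modular Eulerian matrices for $n=4,5$ cleanly (organizing the enumeration so that the injectivity check is transparent), and, above all, locating the $n=6$ counterexample together with an effective switching invariant that certifies non-equivalence while the complexes remain isomorphic.
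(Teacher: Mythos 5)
Your strategy for $n\le 5$ is essentially the paper's: reduce condition (1) to switching equivalence of $M_S,M_{S'}\in\Alt_n(\ZZ/3\ZZ)$ via Theorem \ref{thm.grmodl}, observe that switching preserves the triple sums $m_{ij}+m_{jh}+m_{hi}$ and hence the point simplicial complex, invoke the unique modular Eulerian representative (Theorem \ref{thm.Eulermodl}) for $n=4,5$, and treat $n=3$ (where that theorem does not apply) by listing the switching classes directly. This matches the paper, which enumerates the $4$ (resp.\ $14$) modular Eulerian digraphs for $n=4$ (resp.\ $n=5$), computes each $\Delta_\omega$, and checks pairwise non-isomorphism; the finite verifications you defer are exactly those computations, so this half is sound in outline even if not executed.

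The genuine gap is the $n=6$ assertion. The theorem claims a counterexample \emph{exists}, so a proof must exhibit one or otherwise prove existence; your proposal does neither. The ``conceptual reason'' you offer --- that $6$ is not coprime to $3$, so modular Eulerian normal forms need not be unique --- is only a heuristic: failure of the normal-form theorem at $n=6$ does not imply that two switching-inequivalent matrices with isomorphic complexes exist, which is a logically separate statement. Worse, the step you leave open (``an effective switching invariant that certifies non-equivalence'') is precisely the hard point, since for $n=6$ one cannot appeal to Theorem \ref{thm.Eulermodl} to distinguish switching classes. The paper's Example \ref{ex.simpcomp}(1) resolves both issues concretely: it takes $M_\omega$ with edges $2\to 1$, $3\to 1$ and $M_{\omega'}$ with edges $1\to 2$, $1\to 3$ on six vertices, checks $\calF(\Delta_\omega)=\calF(\Delta_{\omega'})=\{23456,1456,123\}$, and certifies non-equivalence by the isolation technique: if the two were switching equivalent, then $M_{\omega'}$, having an isolated vertex, would have to be isomorphic to one of the finitely many isolations $I_v(M_\omega)$, and an explicit list of the three isolations of $M_\omega$ shows none matches. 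Without an explicit pair and a concrete certificate of this kind, the $n=6$ half of the statement remains unproved.
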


At the end of this paper, we also examine the point simplicial complex $\Delta_S$ of a skew polynomial algebra $S$ at cube roots of unity from the perspective of isolating a vertex of the digraph associated with $S$ by switching (Propositions \ref{prop:isol1} and \ref{prop:isol2}).

\section{Preliminaries on skew polynomial algebras}

In this section, we discuss preliminaries on skew polynomial algebras.
Throughout this paper, let $k$ be an algebraically closed field of characteristic $0$.
Let $n$ be a positive integer and let $[n]=\{1,2,\dots,n\}$.
\begin{dfn}
A \emph{standard graded skew polynomial algebra} in $n$ variables is a graded algebra
\[ S_{\alpha} = k\langle x_1, \dots, x_n \rangle /(x_ix_j -\alpha_{ij} x_jx_i \mid 1\leq i,j \leq n) \]
where $\alpha =(\alpha_{ij}) \in \M_n(k)$ is a matrix such that $\alpha_{ii}=\alpha_{ij}\alpha_{ji}=1$ for all $1\leq i,j \leq n$, and $\deg x_i=1$ for all $1\leq i \leq n$.
\end{dfn}

Since a standard graded skew polynomial algebra in $n$ variables is an $n$-iterated Ore extension of $k$,
we see that it is a noetherian Koszul Artin-Schelter regular algebra of global dimension $n$ with Hilbert series $(1-t)^{-n}$ (see, e.g., \cite{R}).

\begin{dfn}
For two $n \times n$ matrices $M=(m_{ij}), M'=(m_{ij}')$, we say that $M$ and $M'$ are \emph{isomorphic} and write $M\cong M'$ if there exists a permutation $\sigma \in {\mathfrak S}_n$ such that $m'_{\sigma(i)\sigma(j)}=m_{ij}$ for $1 \leq i, j \leq n$. 
\end{dfn}

\begin{prop}[{\cite[Lemma 2.3]{Vi}, \cite[Theorem 2.4]{G}}]\label{prop.skewiso}
Let $S_{\alpha}$ and $S_{\alpha'}$ be standard graded skew polynomial algebras in $n$ variables.
Then $S_{\alpha} \cong S_{\alpha'}$ as graded algebras if and only if $\alpha \cong \alpha'$.
\end{prop}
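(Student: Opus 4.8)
The plan is to prove both implications by working entirely on the degree-one component, using that these algebras are connected, generated in degree one, and have quadratic defining relations.

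For the ``if'' direction, suppose $\alpha \cong \alpha'$, so that $\alpha'_{\sigma(i)\sigma(j)} = \alpha_{ij}$ for some $\sigma \in {\mathfrak S}_n$. I would define $\phi \colon S_\alpha \to S_{\alpha'}$ on generators by $\phi(x_i) = x'_{\sigma(i)}$, where $x'_1,\dots,x'_n$ denote the generators of $S_{\alpha'}$. Since $x'_{\sigma(i)} x'_{\sigma(j)} = \alpha'_{\sigma(i)\sigma(j)} x'_{\sigma(j)} x'_{\sigma(i)} = \alpha_{ij} x'_{\sigma(j)} x'_{\sigma(i)}$ holds in $S_{\alpha'}$, the defining relations of $S_\alpha$ are carried to relations of $S_{\alpha'}$, so $\phi$ is a well-defined homomorphism of graded algebras; the permutation $\sigma^{-1}$ produces a two-sided inverse, so $\phi$ is an isomorphism.

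For the ``only if'' direction, let $\phi \colon S_\alpha \to S_{\alpha'}$ be a graded algebra isomorphism. As both algebras are connected and generated in degree one, $\phi$ is determined by its restriction to the degree-one part, an invertible matrix $C = (c_{ij}) \in \M_n(k)$ with $\phi(x_i) = \sum_k c_{ik} x'_k$. Writing $y_i := \phi(x_i)$, the fact that $\phi$ is a homomorphism forces $y_i y_j = \alpha_{ij} y_j y_i$ in $S_{\alpha'}$ for all $i,j$. Expanding both sides in the PBW basis $\{x'_k x'_l : k \le l\}$ of the degree-two part of $S_{\alpha'}$ and comparing coefficients yields, from the coefficient of $(x'_k)^2$,
\[ (1-\alpha_{ij})\, c_{ik} c_{jk} = 0 \qquad (\text{for all } i,j,k), \]
and from the coefficient of $x'_k x'_l$ with $k<l$,
\[ c_{ik} c_{jl}\,(1-\alpha_{ij}\alpha'_{lk}) + c_{il} c_{jk}\,(\alpha'_{lk}-\alpha_{ij}) = 0 \qquad (\text{for all } i,j \text{ and } k<l). \]
The first family says that the supports of rows $i$ and $j$ of $C$ are disjoint whenever $\alpha_{ij}\neq 1$; dually, the support $T_k = \{i : c_{ik}\neq 0\}$ of each column is a set of mutually commuting indices, since $c_{ik},c_{i'k}\neq 0$ forces $\alpha_{ii'}=1$.

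To conclude, I would first treat the case in which $\alpha_{ij}\neq 1$ for all $i\neq j$: then every column support $T_k$ is a singleton, so $C$ has exactly one nonzero entry in each column, and invertibility makes $C$ a monomial matrix with $c_{i,\sigma(i)}\neq 0$ for a permutation $\sigma$; substituting into $y_iy_j = \alpha_{ij}y_jy_i$ then gives $\alpha'_{\sigma(i)\sigma(j)} = \alpha_{ij}$, i.e.\ $\alpha \cong \alpha'$. For the general case one groups the indices into classes of variables that commute with one another and carry identical commutation data with all remaining variables (equal rows of $\alpha$); the two displayed families together with the invertibility of $C$ should show that $\phi$ maps each such class of $S_\alpha$ onto a class of $S_{\alpha'}$ of the same cardinality, inducing a bijection of classes under which the inter-class scalars agree, which is exactly $\alpha\cong\alpha'$. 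I expect the main obstacle to be precisely this last step: when commuting variables are present, $C$ need not be monomial, and one must invoke the cross-term equations to exclude ``mixing'' of commuting variables that scale differently against the rest (for instance two variables $x_i,x_j$ with $\alpha_{ij}=1$ but $\alpha_{ik}\neq\alpha_{jk}$ for some $k$), thereby confining all mixing to genuine classes, where it is harmless.
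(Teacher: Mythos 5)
The paper offers no internal proof of this proposition: it is imported verbatim from Vitoria's work (the cited Lemma 2.3), so your attempt has to be judged on its own merits rather than against an argument in the text. Your ``if'' direction is complete and correct, and in the ``only if'' direction your mechanism is the right one: restrict the graded isomorphism to degree one, expand $y_iy_j=\alpha_{ij}y_jy_i$ in the PBW basis of the degree-two component of $S_{\alpha'}$, and extract the two coefficient families; the generic case ($\alpha_{ij}\neq 1$ for all $i\neq j$), where $C$ is forced to be monomial, is also handled correctly.

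The general case, however, is left as a plan rather than a proof, and that is where the entire content of the lemma sits: the assertion that the two equation families plus invertibility of $C$ confine all ``mixing'' to classes of equal rows of $\alpha$ and induce a size-preserving, scalar-matching bijection of classes is announced with ``should show'' but never derived. This is a genuine gap. It is, to be fair, closable along exactly your lines. For instance, the key missing claim --- if $c_{ik}\neq 0$ and $c_{jk}\neq 0$ with $i\neq j$, then rows $i$ and $j$ of $\alpha$ are equal, not merely $\alpha_{ij}=1$ --- follows quickly from your cross-term family: for $m\notin\{i,j\}$, if $c_{mk}\neq 0$ then $m$ lies in the same column support and $\alpha_{im}=\alpha_{jm}=1$; if $c_{mk}=0$, choose $l$ with $c_{ml}\neq 0$, and (up to relabeling $k,l$, which costs only a unit factor) the cross-term equation for the pair $(i,m)$ on columns $k,l$ collapses to $c_{ik}c_{ml}\bigl(1-\alpha_{im}\alpha'_{lk}\bigr)=0$, giving $\alpha_{im}=\alpha'_{kl}$, and likewise $\alpha_{jm}=\alpha'_{kl}$. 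One must then still verify that the resulting block structure of $C$ consists of square invertible blocks (so classes of $\alpha$ and of $\alpha'$ correspond bijectively with equal sizes; use invertibility, or apply the same claim to $\phi^{-1}$) and that inter-class scalars agree (which the identity $\alpha_{im}=\alpha'_{kl}$ already provides), before a permutation $\sigma$ with $\alpha'_{\sigma(i)\sigma(j)}=\alpha_{ij}$ can be assembled. Without these steps the ``only if'' direction is not proved.
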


Let $S_{\alpha}$ be a standard graded skew polynomial algebra in $n$ variables. 
We denote by $\GrMod S_{\alpha}$ the category of $\ZZ$-graded right $S_{\alpha}$-modules and degree-preserving $S_{\alpha}$-module homomorphisms.

\begin{dfn}\label{def:omega}
Let $\alpha=(\alpha_{ij}) \in \M_n(k)$ be a matrix with $\alpha_{ii}=\alpha_{ij}\alpha_{ji}=1$.
For $1\leq v \leq n$ and $\lambda \in k^{\times}$, let 
\[Y_{v,\lambda}=\begin{pNiceMatrix}[small, first-row, last-col,]
&&&&\text{\tiny $v$-th}&&&&\\
&&\hspace*{4mm}&&\lambda &&\hspace*{4mm}&&\\
&&1&&\Vdots&&1&&\\
&&&&&&&&\\
&&&&\lambda&&&&\\
\lambda^{-1}&\Cdots&&\lambda^{-1}&1& \lambda^{-1} &\Cdots&\lambda^{-1}&\text{\tiny $v$-th}\\
&&&&\lambda &&&&\\
&&1&&\Vdots&&1&&\\
&&&&\lambda&&&&
\end{pNiceMatrix}\quad \in \M_n(k).\]
We define $\mu_{v,\lambda}(\alpha)=(\beta_{ij}) \in \M_n(k)$ by
\begin{align*}
&\beta_{vi}=\lambda^{-1}\alpha_{vi} \quad \text{if} \; i\neq v,\\
&\beta_{iv}=\beta_{vi}^{-1}=\lambda\alpha_{iv} \quad \text{if} \; i\neq v,\\
&\beta_{ij}=\alpha_{ij} \quad \text{otherwise}.
\end{align*}
That is, \[\mu_{v,\lambda} (\alpha)=\alpha \circ Y_{v,\lambda},\] 
where $\circ$ means the Hadamard product (entrywise product) of matrices. 
We simply write $\mu_{v',\lambda'}\mu_{v,\lambda}(\alpha)$
for $\mu_{v',\lambda'}(\mu_{v,\lambda}(\alpha))$.
\end{dfn}

For an $\NN$-graded algebra $A=\bigoplus_{i \in \NN} A_i$ and a graded algebra automorphism $\theta \in \GrAut A$, the \emph{twisted graded algebra} of $A$ by $\theta$ is the graded algebra $A^{\theta}$ defined as follows:
$A^{\theta} =A$ as graded vector space, and the multiplication of $A^{\theta}$ is given by
\[a*b = a\theta^i(b)\]
for $a \in A_i, b \in A_j$.
By \cite[Theorem 3.1]{Z}, we see that $\GrMod A$ is equivalent to $\GrMod A^{\theta}$.

\begin{lem} \label{lem.swgen}
Let $S_{\alpha}$ and $S_{\alpha'}$ be standard graded skew polynomial algebras in $n$ variables.
If $\alpha'=\mu_{v,\lambda}(\alpha)$ for some $1\leq v\leq n$ and $\lambda \in k^{\times}$, then $\GrMod S_{\alpha}$ is equivalent to $\GrMod S_{\alpha'}$.  
\end{lem}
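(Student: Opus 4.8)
The plan is to realize $S_{\alpha'}$ as a twisted graded algebra of $S_{\alpha}$ by a suitable diagonal graded automorphism, and then to invoke the equivalence $\GrMod A \simeq \GrMod A^{\theta}$ recalled above (from \cite[Theorem 3.1]{Z}). First I would define $\theta \in \GrAut S_{\alpha}$ by $\theta(x_v)=\lambda x_v$ and $\theta(x_i)=x_i$ for $i\neq v$. To see that $\theta$ is well defined, one checks that it preserves the defining relations: applying $\theta$ to $x_ix_j-\alpha_{ij}x_jx_i$ multiplies it by the nonzero scalar $c_ic_j$, where $c_v=\lambda$ and $c_i=1$ for $i\neq v$, so the relation ideal is preserved; since $\theta$ scales each generator by a unit, it is a graded automorphism with inverse scaling by $c_i^{-1}$.

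Next I would compute the twisted product on generators. For $a=x_p$ and $b=x_q$, both of degree $1$, the definition gives $x_p * x_q = x_p\,\theta(x_q) = c_q\, x_px_q = c_q\alpha_{pq}\, x_qx_p$, while $x_q * x_p = c_p\, x_qx_p$. Hence in $S_{\alpha}^{\theta}$ the generators satisfy $x_p * x_q = \beta_{pq}\,(x_q * x_p)$ with $\beta_{pq}=(c_q/c_p)\alpha_{pq}$. Substituting $c_v=\lambda$ and $c_i=1$ for $i\neq v$ yields exactly $\beta_{vi}=\lambda^{-1}\alpha_{vi}$ and $\beta_{iv}=\lambda\alpha_{iv}$ for $i\neq v$, and $\beta_{ij}=\alpha_{ij}$ otherwise; that is, $\beta=\mu_{v,\lambda}(\alpha)=\alpha'$.

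It then remains to identify $S_{\alpha}^{\theta}$ with $S_{\alpha'}$ as graded algebras. The map sending each generator $x_i$ of $S_{\alpha'}$ to $x_i\in S_{\alpha}^{\theta}$ respects the relations by the previous paragraph, hence defines a graded algebra homomorphism; it is surjective because each twisted monomial $x_{i_1}*\cdots*x_{i_d}$ is a nonzero scalar multiple of the ordinary monomial $x_{i_1}\cdots x_{i_d}$, and these span $(S_{\alpha})_d=(S_{\alpha}^{\theta})_d$, so $S_{\alpha}^{\theta}$ is generated in degree $1$. Since $S_{\alpha'}$ and $S_{\alpha}^{\theta}$ both have Hilbert series $(1-t)^{-n}$, comparing dimensions in each degree forces this surjection to be an isomorphism. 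Combining $S_{\alpha}^{\theta}\cong S_{\alpha'}$ with the twist equivalence $\GrMod S_{\alpha}\simeq\GrMod S_{\alpha}^{\theta}$ then gives the claim.

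I expect the only delicate point to be the bookkeeping in the twisted-product computation, in particular keeping the asymmetry between $\lambda$ and $\lambda^{-1}$ straight (which of the $v$-th row and $v$-th column of $\alpha'$ receives which factor), together with the verification that the twist of a skew polynomial algebra is again a skew polynomial algebra rather than merely some quadratic algebra, for which the Hilbert series comparison is the clean way to conclude injectivity of the identity-on-generators map.
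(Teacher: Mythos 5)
Your proposal is correct and follows essentially the same route as the paper: twist $S_{\alpha}$ by the diagonal automorphism $\theta$ scaling $x_v$ by $\lambda$, compute that the generators of $(S_{\alpha})^{\theta}$ satisfy exactly the relations of $S_{\mu_{v,\lambda}(\alpha)}$, identify $(S_{\alpha})^{\theta}\cong S_{\alpha'}$, and invoke Zhang's theorem. The paper's proof is just terser, asserting the identification $(S_{\alpha})^{\theta}\cong S_{\alpha'}$ directly from the relation computation, whereas you additionally spell out the well-definedness of $\theta$ and the Hilbert series argument that the identity-on-generators surjection is an isomorphism.
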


\begin{proof}
There exists a graded algebra automorphism $\theta \in \GrAut S_{\alpha}$ defined by $\theta(x_v)=\lambda x_v$ and $\theta(x_i)=x_i$ for $i \neq v$. In $(S_{\alpha})^{\theta}$, we have
\[x_v*x_i= x_v\theta(x_i)= x_vx_i= \alpha_{vi} x_ix_v=\lambda^{-1}\alpha_{vi}x_i(\lambda x_v)=\lambda^{-1}\alpha_{vi}x_i\theta(x_v) =\lambda^{-1}\alpha_{vi}x_i*x_v\]
for $i \neq v$ and $x_i*x_j=\alpha_{ij} x_j*x_i$ for $i\neq v,j\neq v$, so it follows that $(S_{\alpha})^{\theta}\cong S_{\alpha'}$. Therefore, $\GrMod S_{\alpha}$ is equivalent to $\GrMod S_{\alpha'}$ by \cite[Theorem 3.1]{Z}.
\end{proof}

\begin{thm}\label{thm.grmodgen}
	Let $S_{\alpha}$ and $S_{\alpha'}$ be standard graded skew polynomial algebras in $n$ variables. Then the following are equivalent.
	\begin{enumerate}
		\item $\GrMod S_{\alpha}$ is equivalent to $\GrMod S_{\alpha'}$.
		\item There exist a permutation $\sigma \in {\mathfrak S}_n$ and $\lambda_1,\dots,\lambda_n \in k^{\times}$ such that $\alpha_{\sigma(i)\sigma(j)}'=\lambda_i^{-1}\lambda_j\alpha_{ij}$ for all $1 \leq i<j\leq n$.
		\item There exists a permutation $\sigma \in {\mathfrak S}_n$ such that $\alpha_{\sigma(i)\sigma(j)}'\alpha_{\sigma(j)\sigma(h)}'\alpha_{\sigma(h)\sigma(i)}'=\alpha_{ij}\alpha_{jh}\alpha_{hi}$ for all $1\leq i<j<h\leq n$.
		\item There exist $\lambda_1,\dots,\lambda_n \in k^{\times}$ such that $\alpha'\cong \mu_{1,\lambda_1}\cdots\mu_{n,\lambda_n}(\alpha)$.
	\end{enumerate}
\end{thm}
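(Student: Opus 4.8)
The plan is to prove the equivalences by closing the cycle $(4)\Rightarrow(1)\Rightarrow(2)\Rightarrow(4)$ and separately establishing $(2)\Leftrightarrow(3)$; the two implications $(4)\Rightarrow(1)$ and $(2)\Leftrightarrow(4)$ are essentially bookkeeping, the equivalence $(2)\Leftrightarrow(3)$ is the combinatorial core, and $(1)\Rightarrow(2)$ is the one substantive algebraic input. For $(4)\Rightarrow(1)$ I would apply Lemma~\ref{lem.swgen} once for each factor to obtain $\GrMod S_\alpha\simeq\GrMod S_{\mu_{1,\lambda_1}\cdots\mu_{n,\lambda_n}(\alpha)}$, and then upgrade the matrix isomorphism $\alpha'\cong\mu_{1,\lambda_1}\cdots\mu_{n,\lambda_n}(\alpha)$ to a graded-algebra isomorphism by Proposition~\ref{prop.skewiso}, giving $\GrMod S_{\alpha'}\simeq\GrMod S_\alpha$. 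For $(2)\Leftrightarrow(4)$ I would unwind Definition~\ref{def:omega}: the operations $\mu_{i,\lambda_i}$ pairwise commute (distinct ones touch the shared entry $(i,j)$ through the $\lambda_i^{-1}$ of row $i$ and the $\lambda_j$ of column $j$, independently of order), so $\mu_{1,\lambda_1}\cdots\mu_{n,\lambda_n}(\alpha)$ is the matrix $\beta$ with $\beta_{ij}=\lambda_i^{-1}\lambda_j\alpha_{ij}$ for $i\neq j$; then $\alpha'\cong\beta$ via $\sigma$ is verbatim condition $(2)$ for $i<j$, the cases $i>j$ following from $\alpha_{ji}=\alpha_{ij}^{-1}$.

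The equivalence $(2)\Leftrightarrow(3)$ is where the combinatorics sits. Substituting $(2)$ into the triple product and using $\alpha'_{\sigma(h)\sigma(i)}=(\alpha'_{\sigma(i)\sigma(h)})^{-1}=\lambda_h^{-1}\lambda_i\alpha_{hi}$, the scalars telescope, $(\lambda_i^{-1}\lambda_j)(\lambda_j^{-1}\lambda_h)(\lambda_h^{-1}\lambda_i)=1$, leaving $\alpha_{ij}\alpha_{jh}\alpha_{hi}$; this gives $(2)\Rightarrow(3)$. For $(3)\Rightarrow(2)$ I would first absorb $\sigma$ (relabel $\alpha'$ so that $\sigma=\id$) and set $\gamma_{ij}=\alpha'_{ij}\alpha_{ij}^{-1}$ for $i\neq j$, noting $\gamma_{ji}=\gamma_{ij}^{-1}$. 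Hypothesis $(3)$ then reads $\gamma_{ij}\gamma_{jh}\gamma_{hi}=1$ for $i<j<h$, and by antisymmetry this cocycle identity holds for every ordered triple of distinct indices, i.e.\ $\gamma_{ij}\gamma_{jh}=\gamma_{ih}$ for all distinct $i,j,h$. Thus the multiplicative $1$-cochain $\gamma$ on the complete graph on $[n]$ is closed, hence exact: putting $\lambda_1=1$ and $\lambda_i=\gamma_{1i}$ for $i\geq 2$, the triangle relation through the vertex $1$ gives $\lambda_i^{-1}\lambda_j=\gamma_{i1}\gamma_{1j}=\gamma_{ij}$, that is $\alpha'_{ij}=\lambda_i^{-1}\lambda_j\alpha_{ij}$, which is $(2)$.

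The genuine obstacle is the necessity direction $(1)\Rightarrow(2)$. Here I would invoke Zhang's theorem \cite{Z}: an equivalence $\GrMod S_\alpha\simeq\GrMod S_{\alpha'}$ between graded module categories of connected graded algebras generated in degree $1$ forces $S_{\alpha'}$ to be isomorphic, as a graded algebra, to a twist $(S_\alpha)^\theta$ by some twisting system $\theta=\{\theta_i\}$. The remaining task is to show that, because both algebras are skew polynomial algebras, $\theta$ can be normalized so that its degree-$1$ part is a monomial map $x_i\mapsto\lambda_i x_{\sigma(i)}$; reading the relations of $(S_\alpha)^\theta$ off this map and comparing with those of $S_{\alpha'}$ through Proposition~\ref{prop.skewiso} then yields exactly the permutation $\sigma$ and the scalars $\lambda_i$ of $(2)$. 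I expect the real work to lie precisely in this normalization: twisting systems are a priori more general than the single-automorphism twists used in Lemma~\ref{lem.swgen}, and $S_\alpha$ may carry a large graded automorphism group (for instance the whole of $\mathrm{GL}_n$ when $\alpha\equiv 1$), so the reduction to a monomial degree-$1$ map is not formal. It should be forced by the special monomial shape of the defining relations $x_ix_j-\alpha_{ij}x_jx_i$ of a skew polynomial algebra, which severely constrains which twists can again land inside this class.
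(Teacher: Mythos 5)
Your handling of the combinatorial implications is correct and essentially coincides with the paper's argument. The paper proves $(4)\Rightarrow(1)$ exactly as you do (iterating Lemma~\ref{lem.swgen} and then invoking Proposition~\ref{prop.skewiso}), declares $(2)\Rightarrow(3)$ clear (your telescoping computation), and closes the loop by proving $(3)\Rightarrow(4)$: it sets $\lambda_i=\alpha'_{\sigma(1)\sigma(i)}/\alpha_{1i}$ for $2\leq i\leq n$ and checks $\mu_{2,\lambda_2}\cdots\mu_{n,\lambda_n}(\alpha)\cong\alpha'$ via $\sigma$. That is the same vertex-$1$ normalization as your cocycle argument for $(3)\Rightarrow(2)$; your observation that the $\mu_{i,\lambda_i}$ commute and that $\mu_{1,\lambda_1}\cdots\mu_{n,\lambda_n}(\alpha)$ has $(i,j)$ entry $\lambda_i^{-1}\lambda_j\alpha_{ij}$ is exactly why the two routes ($(3)\Rightarrow(2)$ plus $(2)\Leftrightarrow(4)$ versus the paper's direct $(3)\Rightarrow(4)$) are the same argument in different clothing.

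The genuine gap is $(1)\Rightarrow(2)$, and you have flagged it yourself without closing it. Invoking Zhang's theorem \cite{Z} gives you that $S_{\alpha'}$ is a graded-algebra isomorph of a twist of $S_\alpha$ by a twisting system, but the passage from an arbitrary twisting system to a monomial degree-one map $x_i\mapsto\lambda_i x_{\sigma(i)}$ is precisely the content of the implication, and your proposal offers no argument for it --- only the (accurate) remark that it ``should be forced'' by the monomial shape of the relations. Since twisting systems need not come from a single automorphism, and since $\GrAut S_\alpha$ can be huge (all of $\mathrm{GL}_n$ when $\alpha\equiv1$, as you note), this reduction cannot be waved through; as written, the one implication that actually uses the hypothesis of categorical equivalence is unproved in your proposal. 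For what it is worth, the paper does not prove it either: it cites Vitoria \cite[Proposition 2.3]{Vi}, which is verbatim the statement $(1)\Rightarrow(2)$, and whose proof proceeds along the lines you sketch. So your plan is the right one, but to be complete it needs either that citation or the actual normalization argument carried out.
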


\begin{proof}
$(4) \Rightarrow (1)$ follows from Proposition \ref{prop.skewiso} and Lemma \ref{lem.swgen}. 
$(1) \Rightarrow (2)$ was shown in \cite[Proposition 2.3]{Vi}.
$(2) \Rightarrow (3)$ is clear. Thus we show that $(3) \Rightarrow (4)$. 
	
For each $2 \leq i \leq n$, we define
$$\displaystyle \lambda_i:=\frac{\alpha_{\sigma(1)\sigma(i)}'}{\alpha_{1i}} \quad \text{ and }\quad \widetilde{\alpha}=(\widetilde{\alpha}_{ij}):=\mu_{2,\lambda_2} \cdots \mu_{n,\lambda_n}(\alpha).$$
In what follows, we prove that $\sigma$ induces an isomorphism of $\widetilde{\alpha}$ and $\alpha'$. 
What we have to do is to show that $\widetilde{\alpha}_{ab}=\alpha_{\sigma(a)\sigma(b)}'$ for any $1 \leq a<b \leq n$. 
	
Let $2 \leq i \leq n$. Then $\alpha_{\sigma(1)\sigma(i)}'=\lambda_i\alpha_{1i}$ by definition of $\lambda_i$. 
On the other hand, regarding $\widetilde{\alpha}$, since we apply $\mu_{i,\lambda_i}$ to $\alpha$, we see that $\widetilde{\alpha}_{1i}=\lambda_i\alpha_{1i}$ by definition of $\mu_{i,\lambda_i}$. 
Therefore, $\widetilde{\alpha}_{1i}=\alpha_{\sigma(1)\sigma(i)}'$. 
	
Let $2 \leq i<j \leq n$. Then we have $\alpha_{\sigma(1)\sigma(i)}'=\lambda_i\alpha_{1i}$ and $\alpha_{\sigma(1)\sigma(j)}'=\lambda_j\alpha_{1j}$. 
Moreover, we have
$\alpha_{\sigma(1)\sigma(i)}'\alpha_{\sigma(i)\sigma(j)}'\alpha_{\sigma(j)\sigma(1)}'=\alpha_{1i}\alpha_{ij}\alpha_{j1}$ by assumption.
Thus we obtain
$\alpha_{1i}\alpha_{ij}\alpha_{1j}^{-1}=\lambda_i\alpha_{1i}\alpha_{\sigma(i)\sigma(j)}'(\lambda_j\alpha_{1j})^{-1}$, and hence 
$\alpha_{\sigma(i)\sigma(j)}'=\lambda_i^{-1}\lambda_j\alpha_{ij}$.
On the other hand, since we apply $\mu_{i,\lambda_i}$ and $\mu_{j,\lambda_j}$ to $\alpha$, $\alpha_{ij}$ changes as follows: 
\[
\alpha_{ij} \;\overset{\mu_{i,\lambda_i}}{\longsquiggly}\; \lambda_i^{-1}\alpha_{ij} \;\overset{\mu_{j,\lambda_j}}{\longsquiggly}\;\lambda_j\lambda_i^{-1}\alpha_{ij}=\widetilde{\alpha}_{ij}.
\]
This shows $\widetilde{\alpha}_{ij}=\alpha_{\sigma(i)\sigma(j)}'$.

Therefore, we conclude that $\widetilde{\alpha}_{ab}=\alpha_{\sigma(a)\sigma(b)}'$ for any $1 \leq a<b \leq n$, as required.  
\end{proof}
\begin{rem}\label{rem:unique}
In the statement of Theorem~\ref{thm.grmodgen}(4), we note that the values of $\lambda_1,\ldots,\lambda_n \in k^\times$ are not unique. 
On the other hand, by definition of $\mu_{v,\lambda}$, we see that 
if $\alpha' \cong \mu_{1,\lambda_1} \cdots \mu_{n,\lambda_n}(\alpha)$, then $\alpha' \cong \alpha \circ (X_{1,\lambda_1} \circ \cdots \circ X_{n,\lambda_n})$. 
Moreover, if $\alpha' \cong \mu_{1,\lambda_1} \cdots \mu_{n,\lambda_n}(\alpha) \cong \mu_{1,\lambda_1'} \cdots \mu_{n,\lambda_n'}(\alpha)$, 
then $X_{1,\lambda_1} \circ \cdots \circ X_{n,\lambda_n} \cong X_{1,\lambda_1'} \circ \cdots \circ X_{n,\lambda_n'}$. 
This implies that the tuples $(\lambda_1,\ldots,\lambda_n)$ and $(\lambda_1',\ldots,\lambda_n')$ are equal up to a scalar product in $k^\times$ and up to permutation. 
\end{rem}

It should be noted that if $S$ and $S'$ are skew polynomial algebras with different numbers of variables, then $\GrMod S$ and $\GrMod S'$ are not equivalent by \cite[Proposition 5.7]{Z}.

The point schemes of Artin-Schelter regular algebras are a fundamental geometric object of the study in noncommutative algebraic geometry (see, e.g., \cite{R}).
Briefly speaking, the point scheme is defined by ``the parameter space of point modules'' (see \cite{ATV}).
Vitoria \cite{Vi} and independently Belmans, De~Laet, and Le~Bruyn \cite{BDL}
computed the point scheme of $S_\alpha$ explicitly.

For a subset $\{i_1,\dots,i_s\} \subset [n]$, we define the subspace
\[\PP(i_1,\dots,i_s):= \bigcap_{i \in [n]\setminus \{i_1,\dots,i_s\}} {\mathcal V}(x_i) \quad  \subset \PP^{n-1},\]
where ${\mathcal V}(x_i)$ means the zero-locus of $x_i$ in $\PP^{n-1}$.

\begin{thm}[{\cite[Proposition 4.2]{Vi}, \cite[Theorem 1(1)]{BDL}}]\label{thm.ps}
Let $S_\alpha = k\langle x_1, \dots, x_n \rangle/(x_ix_j-\alpha_{ij}x_jx_i)$ be a standard graded skew polynomial algebra.
Then the point scheme of $S_\alpha$ is given by
\[ \Gamma_{\alpha}= \bigcap_{\substack{1\leq i<j<h\leq n \\ \alpha_{ij}\alpha_{jh}\alpha_{hi} \neq 1}}\calV(x_ix_jx_h) \quad \subset \PP^{n-1}. \]
In particular, it is the union of a collection of subspaces $\PP(i_1,\dots,i_s)$.
\end{thm}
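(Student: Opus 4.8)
The plan is to compute the point scheme directly from its description as the moduli of point modules, following the Artin--Tate--Van den Bergh formalism. Recall that a point module over $S_\alpha$ is a cyclic graded right module $M=\bigoplus_{d\ge 0}M_d$, generated in degree $0$, with $\dim_k M_d=1$ for all $d$. Fixing a basis $e_d$ of each $M_d$, the action of the generators is recorded by scalars via $e_d\cdot x_i=c^{(i)}_d e_{d+1}$, so that $M$ determines a sequence of points $p_d=[c^{(1)}_d:\dots:c^{(n)}_d]\in\PP^{n-1}$. First I would multilinearize the defining relations: applying $x_ix_j-\alpha_{ij}x_jx_i$ to $e_d$ yields, for every pair $i<j$, the bilinear incidence condition
\[ c^{(i)}_d\,c^{(j)}_{d+1}=\alpha_{ij}\,c^{(j)}_d\,c^{(i)}_{d+1}, \]
so that consecutive points of the sequence lie on the bilinear correspondence $\Gamma'\subset\PP^{n-1}\times\PP^{n-1}$ cut out by $\{a_ib_j-\alpha_{ij}a_jb_i\}_{i<j}$. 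Since $S_\alpha$ is Koszul and Artin--Schelter regular, the point scheme is the image of $\Gamma'$ under the first projection, and it suffices to determine which points $p=[a_1:\dots:a_n]$ extend to an admissible neighbour $q=[b_1:\dots:b_n]$.

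The core of the argument is a support analysis of the incidence equations for a fixed $p=[a]$ with $I:=\operatorname{Supp}(a)$. For $i\in I$ and $j\notin I$ the equation $a_ib_j=\alpha_{ij}a_jb_i$ forces $b_j=0$, so any neighbour $q$ is supported on $I$; for $i,j\in I$, dividing by $a_ia_j\neq 0$ and setting $t_i:=b_i/a_i$ reduces the system to $t_j=\alpha_{ij}t_i$ for all $i,j\in I$. Next I would observe that this system has a nonzero solution --- equivalently, a solution with all $t_i\neq 0$, since $t_{i_0}=0$ propagates to $t_j=\alpha_{i_0 j}t_{i_0}=0$ for every $j\in I$ --- if and only if it is consistent as a cocycle, which by composing the relations around a triangle ($t_it_jt_h=\alpha_{ij}\alpha_{jh}\alpha_{hi}\,t_it_jt_h$) is exactly the condition $\alpha_{ij}\alpha_{jh}\alpha_{hi}=1$ for every triple $\{i,j,h\}\subseteq I$. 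When this holds the solution is unique up to scalar and again supported precisely on $I$, so the neighbour $q$ has the same support as $p$; iterating produces a full point module and shows that $p$ lies in the point scheme. Conversely, if $I$ contains a triple with $\alpha_{ij}\alpha_{jh}\alpha_{hi}\neq 1$, no admissible neighbour exists and $p$ is excluded.

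Combining these two implications gives the set-theoretic identity: $p=[a]$ lies in $\Gamma_\alpha$ if and only if $\operatorname{Supp}(a)$ contains no triple $\{i,j,h\}$ with $\alpha_{ij}\alpha_{jh}\alpha_{hi}\neq 1$. Translating ``$\operatorname{Supp}(a)$ avoids the bad triple $\{i,j,h\}$'' into the vanishing $x_ix_jx_h=0$ yields precisely
\[ \Gamma_\alpha=\bigcap_{\substack{1\le i<j<h\le n\\ \alpha_{ij}\alpha_{jh}\alpha_{hi}\neq 1}}\calV(x_ix_jx_h), \]
and since the defining monomials are squarefree, the zero locus is the union of the coordinate subspaces $\PP(i_1,\dots,i_s)$ indexed by the subsets of $[n]$ containing no bad triple. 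The main obstacle I anticipate is not this linear-algebra computation but the scheme-theoretic bookkeeping: one must justify that $\Gamma'$ represents the functor of truncated point modules with the correct scheme structure, that every point of its projection genuinely extends to an infinite point module (here guaranteed by the support-preserving, unique continuation above), and that the squarefree cubics generate the full elimination ideal rather than merely its radical. Verifying that the point scheme is reduced and that eliminating $b$ from the incidence equations produces exactly these cubics is where the argument needs the regularity of $S_\alpha$ and the most care.
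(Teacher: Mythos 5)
The first thing to note is that this paper contains no proof of Theorem \ref{thm.ps} at all: it is imported verbatim from \cite[Proposition 4.2]{Vi} and \cite[Theorem 1 (1)]{BDL}. So your proposal can only be measured against those external proofs and against the way the theorem is used here. Your route is the standard one and its set-theoretic core is correct and complete: multilinearizing the relations to get the incidence correspondence $\Gamma'\subset\PP^{n-1}\times\PP^{n-1}$, the support analysis (any neighbour $q$ of $p$ is supported inside $\operatorname{Supp}(p)$, and, when it exists, on exactly $\operatorname{Supp}(p)$), the cocycle/triangle criterion for solvability of the system $t_j=\alpha_{ij}t_i$ on the support, and the unique-continuation argument that bootstraps one admissible neighbour into a full point module. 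The translation of ``no bad triple in the support'' into $\bigcap\calV(x_ix_jx_h)$ and the observation that a squarefree monomial locus is a union of subspaces $\PP(i_1,\dots,i_s)$ are also fine.

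The genuine gap is the one you flag in your final paragraph and then leave unresolved, and it cannot be waved through, because this paper relies on precisely that part of the statement: Remark \ref{rem:pv} deduces from Theorem \ref{thm.ps} that the point \emph{scheme} $\Gamma_\alpha$ is reduced, which is what licenses calling it the point variety for the rest of the paper. Your argument identifies only the closed points. Two things are missing. First, representability and stabilization: the point scheme is the inverse limit of the truncated point-module schemes $\Gamma_d$, and your claim that it ``is the image of $\Gamma'$ under the first projection'' is exactly what must be proved (via strong noetherianity and Artin--Zhang, or by an explicit stabilization argument); moreover the scheme-theoretic image is not determined by the set-theoretic image. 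Second, reducedness: you must show the limiting ideal is the radical Stanley--Reisner ideal generated by the bad cubics and not a thickening of it. This is a real assertion, not bookkeeping: point schemes of quadratic AS-regular algebras can be non-reduced --- for instance, the graded enveloping algebra $k\langle x,y,z\rangle/(xz-zx,\, yz-zy,\, xy-yx-z^2)$ is AS-regular of dimension $3$ and its point scheme is the triple line $\calV(z^3)$. One clean way to close the gap, close in spirit to \cite{BDL}: on an admissible support $I$ the triangle conditions give $\alpha_{ij}=\lambda_i\lambda_j^{-1}$ with $\lambda_i=\alpha_{ii_0}$, so by Theorem \ref{thm.grmodgen} the restricted algebra $S_{\alpha|_I}$ is a Zhang twist of the commutative polynomial ring on $I$, whose point scheme is $\PP(I)$ with its reduced structure; this pins down the scheme structure along every component. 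As written, your proposal proves the set-theoretic identity, not the theorem as stated and as used in this paper.
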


\begin{rem}\label{rem:pv}
By Theorem \ref{thm.ps}, the point scheme $\Gamma_{\alpha}$ is reduced,
so it will henceforth be referred to as the \emph{point variety} of $S_{\alpha}$.
\end{rem}

\begin{ex}\label{ex:ps3}
Let $S_\alpha = k\langle x_1, x_2, x_3 \rangle/(x_ix_j-\alpha_{ij}x_jx_i)$ be a standard graded skew polynomial algebra in $3$ variables. Then the point variety $\Gamma_{\alpha}$ is given as follows:
\[
\Gamma_{\alpha}=
\begin{cases}
\PP^2 = \PP(1,2,3) & \text{if}\; \alpha_{12}\alpha_{23}\alpha_{31}=1,\\
\calV(x_1x_2x_3)= \calV(x_1)\cup\calV(x_2)\cup\calV(x_3)=\PP(2,3) \cup \PP(1,3) \cup \PP(1,2) & \text{if}\; \alpha_{12}\alpha_{23}\alpha_{31}\neq 1.
\end{cases}
\]
\end{ex}

The following is a well-known consequence of a general result due to Zhang \cite{Z}; see \cite[Example 4.1]{Mcp} for details.

\begin{prop}\label{prop.grps}
Let $S_{\alpha}$ and $S_{\alpha'}$ be standard graded skew polynomial algebras. If $\GrMod S_{\alpha}$ is equivalent to $\GrMod S_{\alpha'}$, then $\Gamma_{\alpha}$ is isomorphic to $\Gamma_{\alpha'}$.
\end{prop}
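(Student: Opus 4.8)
The plan is to read the point variety off the purely combinatorial data attached to $\alpha$ and then transport it along a coordinate permutation. By Theorem \ref{thm.ps}, the point variety depends on $\alpha$ only through the collection of ``bad'' triples
\[ T_\alpha := \{\, \{i,j,h\} \subset [n] : \alpha_{ij}\alpha_{jh}\alpha_{hi} \neq 1 \,\}, \]
since $\Gamma_\alpha = \bigcap_{\{i,j,h\}\in T_\alpha} \calV(x_ix_jx_h)$. First I would record that $T_\alpha$ really is a set of \emph{unordered} triples: using $\alpha_{ii}=\alpha_{ij}\alpha_{ji}=1$ one checks that permuting the three indices either leaves the product $\alpha_{ij}\alpha_{jh}\alpha_{hi}$ unchanged (cyclic permutations) or replaces it by its inverse (transpositions), so the condition ``$=1$'' is independent of the chosen ordering.

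Next, since $\GrMod S_\alpha$ is equivalent to $\GrMod S_{\alpha'}$, I would invoke Theorem \ref{thm.grmodgen}, specifically the implication $(1)\Rightarrow(3)$, to obtain a permutation $\sigma \in {\mathfrak S}_n$ with
\[ \alpha'_{\sigma(i)\sigma(j)}\alpha'_{\sigma(j)\sigma(h)}\alpha'_{\sigma(h)\sigma(i)} = \alpha_{ij}\alpha_{jh}\alpha_{hi} \qquad (1\le i<j<h\le n). \]
In particular the left-hand side equals $1$ exactly when the right-hand side does, so $\{i,j,h\}\in T_\alpha$ if and only if $\{\sigma(i),\sigma(j),\sigma(h)\}\in T_{\alpha'}$; that is, $\sigma$ induces a bijection $T_\alpha \to T_{\alpha'}$.

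Finally I would realize $\sigma$ geometrically. Let $\phi_\sigma\colon \PP^{n-1}\to\PP^{n-1}$ be the linear automorphism permuting the homogeneous coordinates so that $\phi_\sigma(\calV(x_i)) = \calV(x_{\sigma(i)})$; then $\phi_\sigma(\calV(x_ix_jx_h)) = \calV(x_{\sigma(i)}x_{\sigma(j)}x_{\sigma(h)})$. As a bijection, $\phi_\sigma$ commutes with the intersections defining the two point varieties, so combining this with the bijection $T_\alpha\to T_{\alpha'}$ from the previous step yields
\[ \phi_\sigma(\Gamma_\alpha) = \bigcap_{\{i,j,h\}\in T_\alpha} \calV(x_{\sigma(i)}x_{\sigma(j)}x_{\sigma(h)}) = \bigcap_{\{a,b,c\}\in T_{\alpha'}} \calV(x_ax_bx_c) = \Gamma_{\alpha'}, \]
and $\phi_\sigma$ thus restricts to an isomorphism $\Gamma_\alpha \xrightarrow{\sim}\Gamma_{\alpha'}$.

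These steps are all routine once Theorem \ref{thm.grmodgen} is in hand; the only point needing a little care is the first one, the well-definedness of $T_\alpha$ on unordered triples, which is exactly what converts the ordered matching of $\ref{thm.grmodgen}(3)$ into a permutation of the support of the point variety. I would expect the entire conceptual weight to sit in Theorem \ref{thm.grmodgen}. An alternative, more self-contained route would bypass the combinatorics and argue directly from Zhang's theorem that any such equivalence exhibits $S_{\alpha'}$ as a twist $S_\alpha^\theta$ by a twisting system, together with the fact that twisting preserves the point scheme; but this would require setting up the functoriality of the point-scheme construction, which the combinatorial argument above avoids.
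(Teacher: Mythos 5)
Your proof is correct, but it takes a genuinely different route from the paper's. The paper gives no combinatorial argument at all: it records Proposition~\ref{prop.grps} as a well-known consequence of Zhang's twisting theory \cite{Z}, citing \cite[Example 4.1]{Mcp} for details --- which is precisely the ``alternative route'' you sketch at the end (an equivalence $\GrMod S_{\alpha}\simeq\GrMod S_{\alpha'}$ exhibits $S_{\alpha'}$ as a twist of $S_{\alpha}$, and twisting preserves the point scheme). Your main argument instead stays inside the paper's own toolkit: Theorem~\ref{thm.ps} for the explicit shape of $\Gamma_{\alpha}$, Theorem~\ref{thm.grmodgen}~(1)$\Rightarrow$(3) for a permutation $\sigma$ matching the triple products, the (correctly verified) observation that the condition $\alpha_{ij}\alpha_{jh}\alpha_{hi}\neq 1$ depends only on the unordered triple --- cyclic permutations fix the product, transpositions invert it --- and the coordinate permutation $\phi_\sigma$ of $\PP^{n-1}$ realizing $\sigma$ geometrically. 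Each approach buys something: the paper's citation is shorter and conceptual, since invariance of the point scheme holds for a much wider class of algebras with no computation; your proof is explicit (the isomorphism $\Gamma_{\alpha}\cong\Gamma_{\alpha'}$ is induced by a coordinate permutation, hence in particular matches irreducible components), is self-contained modulo Theorem~\ref{thm.grmodgen} (whose hard implication (1)$\Rightarrow$(2) is itself quoted from \cite{Vi}), and in effect also proves the sharper combinatorial statement that $\sigma$ gives an isomorphism $\Delta_{\alpha}\cong\Delta_{\alpha'}$ of point simplicial complexes, which the paper obtains separately via Proposition~\ref{prop.pssc}. One point you leave implicit: the proposition does not assume $S_{\alpha}$ and $S_{\alpha'}$ have the same number of variables, whereas Theorem~\ref{thm.grmodgen} does; this is harmless, since the paper notes after Theorem~\ref{thm.grmodgen} that algebras in different numbers of variables never have equivalent graded module categories by \cite[Proposition 5.7]{Z}, but it deserves a sentence.
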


We say that a collection $\Delta$ of subsets of a finite set $V$ is an (abstract) \emph{simplicial complex} on the vertex set $V$ 
if the following conditions are satisfied:
\begin{itemize}
\item $\{v\} \in \Delta$ for each $v \in V$,
\item $F \in \Delta$ and $F' \subset F$ imply $F' \in \Delta$. 
\end{itemize}
Let $\Delta$ be a simplicial complex on a nonempty finite set $V$.
The \emph{dimension} of $\Delta$ is defined by $\dim \Delta=\max\{|F|-1 \mid F \in \Delta\}$.
Let $\calF(\Delta)$ denote the set of all facets (i.e., maximal faces) of $\Delta$. 
For two simplicial complexes $\Delta$ and $\Delta'$, 
we say that $\Delta$ and $\Delta'$ are \emph{isomorphic} as simplicial complexes 
if there exists a bijection $\phi$ between the vertex sets
$V(\Delta)$ and $V(\Delta')$ which induces a bijection between $\calF(\Delta)$ and $\calF(\Delta')$. 
We here define the simplicial complex associated with a standard graded skew polynomial algebra $S_\alpha$.

\begin{dfn}
Let $S_{\alpha}$ be a standard graded skew polynomial algebra in $n$ variables.
We define the simplicial complex $\Delta_{\alpha}$ on $[n]$ as follows:
a subset $F \subset [n]$ belongs to $\Delta_{\alpha}$ if and only if $\alpha_{ij}\alpha_{jh}\alpha_{hi}=1$ for all distinct $i,j,h$ such that $\{i,j,h\}\subset F$. We call $\Delta_{\alpha}$ the \emph{point simplicial complex}
of $S_{\alpha}$.
\end{dfn}

The point variety $\Gamma_{\alpha}$ is closely related to the point simplicial complex $\Delta_{\alpha}$. Indeed, we have $\Gamma_\alpha=\bigcup_{F \in \calF(\Delta_\alpha)}\PP(F)$, that is,
\begin{align}\label{eq:calFD}
\calF(\Delta_\alpha)=\{F \subset [n] \mid \PP(F) \ \text{is an irreducible component of $\Gamma_\alpha$}\}.
\end{align}
Moreover, the following holds.

\begin{prop}\label{prop.pssc}
Let $S_{\alpha}$ and $S_{\alpha'}$ be standard graded skew polynomial algebras. Then  $\Gamma_{\alpha}$ and $\Gamma_{\alpha'}$ are isomorphic if and only if $\Delta_{\alpha}$ and $\Delta_{\alpha'}$ are isomorphic.
\end{prop}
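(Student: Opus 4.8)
The plan is to exploit the explicit description of $\Gamma_\alpha$ as a union of coordinate subspaces. By Theorem \ref{thm.ps} together with \eqref{eq:calFD}, we have $\Gamma_\alpha = \bigcup_{F \in \calF(\Delta_\alpha)}\PP(F)$, and the subspaces $\PP(F)$ with $F \in \calF(\Delta_\alpha)$ are precisely the irreducible components of $\Gamma_\alpha$, each a projective space with $\dim \PP(F) = |F| - 1$. First I would record the basic dictionary: writing $\operatorname{supp}(p) = \{i : p_i \neq 0\}$ for $p = [p_1:\cdots:p_n] \in \PP^{n-1}$, one has $\PP(F) = \{p : \operatorname{supp}(p) \subseteq F\}$, and hence for any subfamily $\mathcal U \subseteq \calF(\Delta_\alpha)$ the intersection $\bigcap_{F \in \mathcal U}\PP(F) = \PP\big(\bigcap_{F \in \mathcal U} F\big)$ is again a coordinate subspace, of dimension $|\bigcap_{F \in \mathcal U} F| - 1$ (the empty intersection giving the empty variety, of dimension $-1$).

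For the implication $\Delta_\alpha \cong \Delta_{\alpha'} \Rightarrow \Gamma_\alpha \cong \Gamma_{\alpha'}$, a simplicial isomorphism is by definition a bijection $\phi$ of the vertex sets, which forces $n = n'$ and carries $\calF(\Delta_\alpha)$ onto $\calF(\Delta_{\alpha'})$. I would extend $\phi$ to the coordinate-permutation automorphism $g$ of $\PP^{n-1}$ sending a point to the one whose $\phi(i)$-th coordinate is its $i$-th coordinate; then $g(\PP(F)) = \PP(\phi(F))$, and since facets are sent to facets, $g$ restricts to an isomorphism $\Gamma_\alpha \xrightarrow{\sim}\Gamma_{\alpha'}$.

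The substantive direction is $\Gamma_\alpha \cong \Gamma_{\alpha'}\Rightarrow \Delta_\alpha \cong \Delta_{\alpha'}$. An isomorphism of varieties is in particular a homeomorphism preserving dimension, so it induces a bijection $\psi \colon \calF(\Delta_\alpha) \to \calF(\Delta_{\alpha'})$ of irreducible components with $|F| = |\psi(F)|$, and it respects all intersections; combined with the dictionary this gives $\big|\bigcap_{F \in \mathcal U} F\big| = \big|\bigcap_{F \in \mathcal U}\psi(F)\big|$ for every subfamily $\mathcal U$. The main obstacle is to upgrade this facet bijection to a bijection of the vertex sets $[n]$ and $[n']$ inducing $\psi$: an abstract isomorphism need not respect the coordinate (torus-orbit) structure, and "parallel" vertices lying in exactly the same facets cannot be distinguished geometrically. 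I would resolve this by a counting argument. For a subfamily $\mathcal T \subseteq \calF(\Delta_\alpha)$, the number of vertices $v$ with $\{F : v \in F\} = \mathcal T$ is recovered from the intersection cardinalities $\big|\bigcap_{F \in \mathcal U} F\big|$ by Möbius inversion over the subset lattice, and therefore equals the corresponding count for $\Delta_{\alpha'}$ relative to $\psi(\mathcal T)$; summing over all $\mathcal T$ also yields $n = n'$. Matching vertices arbitrarily within each such signature class produces a bijection $\phi \colon [n] \to [n']$ with $v \in F \iff \phi(v) \in \psi(F)$, whence $\phi(F) = \psi(F)$ for every facet and $\phi$ is the desired simplicial isomorphism. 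The only delicate point is the inclusion–exclusion bookkeeping, which the dictionary above reduces to purely combinatorial data already shown to be preserved.
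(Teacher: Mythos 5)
Your proof is correct, but it takes a different route from the paper, whose own proof of Proposition \ref{prop.pssc} is a one-line reduction: it cites \eqref{eq:calFD} together with \cite[Lemma 2.14]{HU}, the lemma from the authors' earlier paper asserting precisely that two finite unions of coordinate subspaces $\bigcup_{F\in\calF(\Delta)}\PP(F)$ are isomorphic as varieties if and only if the simplicial complexes are isomorphic. What you have done, in effect, is reprove that outsourced lemma from scratch. Your argument --- identifying the irreducible components with the facets via \eqref{eq:calFD} and Theorem \ref{thm.ps}, using that an isomorphism of varieties is a dimension-preserving homeomorphism to obtain a facet bijection $\psi$ with $\bigl|\bigcap_{F\in\mathcal{U}}F\bigr|=\bigl|\bigcap_{F\in\mathcal{U}}\psi(F)\bigr|$ for every subfamily $\mathcal{U}$, and then recovering the size of each ``signature class'' of vertices by M\"obius inversion over the subfamily lattice so that $\psi$ lifts (non-canonically) to a vertex bijection $\phi$ with $\phi(F)=\psi(F)$ --- is complete, and it correctly isolates the one genuinely delicate point: vertices lying in exactly the same facets cannot be distinguished geometrically, so the lift must be made class by class rather than pointwise. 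The paper's approach buys brevity by delegating the combinatorics; yours buys self-containedness and makes visible exactly which geometric data (component dimensions and intersection dimensions) the simplicial complex encodes.
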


\begin{proof}
This follows from \cite[Lemma 2.14]{HU} and \eqref{eq:calFD}.
\end{proof}

\begin{ex}\label{ex:psimp3}
Let $S_\alpha = k\langle x_1, x_2, x_3 \rangle/(x_ix_j-\alpha_{ij}x_jx_i)$ be a standard graded skew polynomial algebra in $3$ variables. Then the point simplicial complex $\Delta_{\alpha}$ is given as follows:
\[
\calF(\Delta_{\alpha})=
\begin{cases}
\{\{1,2,3\}\} & \text{if}\; \alpha_{12}\alpha_{23}\alpha_{31}=1,\\
\{\{2,3\}, \{1,3\},\{1,2\}\}& \text{if}\; \alpha_{12}\alpha_{23}\alpha_{31}\neq 1.
\end{cases}
\]
\end{ex}

Let $S_{\alpha}$ and $S_{\alpha'}$ be standard graded skew polynomial algebras in $n$ variables. The following is a summary of this section:
\[
\xymatrix@R=1.5pc@C=0.3pc@M=5pt{
\txt{$\GrMod S_{\alpha}$ is equivalent to $\GrMod S_{\alpha'}$\\
(graded module categories)} \ar@{=>}[d]_-{\text{Prop.\,\ref{prop.grps}}}
&\overset{\text{Thm.\,\ref{thm.grmodgen}}}{\Longleftrightarrow}
&\txt{$\exists \lambda_1,\dots,\lambda_n \in k^{\times}\;\text{s.t.}\; \alpha'\cong \mu_{1,\lambda_1}\cdots\mu_{n,\lambda_n}(\alpha)$\\
(matrices)}\\
\txt{$\Gamma_{\alpha} \cong \Gamma_{\alpha'}$\\
(point varieties)} 
&\overset{\text{Prop.\,\ref{prop.pssc}}}{\Longleftrightarrow}
&
\txt{$\Delta_{\alpha'} \cong \Delta_{\alpha'}$\\
(point simplicial complexes)}.
}
\]

\section{skew polynomial algebras at roots of unity and switching}

Here and throughout the paper, let $\ell$ be an integer greater than or equal to $2$.
In this section, we establish a connection between ``the equivalence of graded module categories over skew polynomial algebras at $\ell$-th roots of unity'' and ``the switching equivalence of skew-symmetric matrices over $\ZZ/\ell \ZZ$''.

\begin{dfn}
A \emph{standard graded skew polynomial algebra at $\ell$-th roots of unity} is a standard graded skew polynomial algebra $S_{\omega}$ such that $\omega_{ij}$ are $\ell$-th roots of unity for all $1\leq i,j \leq n$.
\end{dfn}

From now on, we fix an $\ell$-th primitive root of unity $\zeta_\ell$.

\begin{dfn}
Let $S_{\omega}$ be a standard graded skew polynomial algebra at $\ell$-th roots of unity. Then $\omega_{ij}$ can be represented as $\zeta_\ell^{m_{ij}}$ for some $m_{ij} \in \ZZ/\ell\ZZ$.
The \emph{E-matrix $M_{\omega} \in \M_n(\ZZ/\ell\ZZ)$ associated with $S_{\omega}$} is defined by $M_{\omega}=(m_{ij})$. Note that $M_\omega$ is uniquely defined from $S_\omega$. 
\end{dfn}

Recall that a matrix $M \in \M_n(\ZZ/\ell\ZZ)$ is called a \emph{skew-symmetric matrix} if $m_{ij}+m_{ji}=m_{ii}=0$ in $\ZZ/\ell\ZZ$ for any $i,j$. 
Let 
\[
\Alt_n(\ZZ/\ell\ZZ)=\{M \in \M_n(\ZZ/\ell\ZZ) \mid \text{$M$ is a skew-symmetric matrix} \}.
\]
It is easy to see that $M_{\omega} \in \Alt_n(\ZZ/\ell\ZZ)$.  

\begin{prop}\label{prop.skewisol}
Let $S_{\omega}$ and $S_{\omega'}$ be standard graded skew polynomial algebras in $n$ variables at $\ell$-th roots of unity.
Then $S_{\omega} \cong S_{\omega'}$ as graded algebras if and only if $M_{\omega} \cong M_{\omega'}$.
\end{prop}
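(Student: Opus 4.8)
The plan is to reduce the statement to Proposition~\ref{prop.skewiso}, which already characterizes graded-algebra isomorphism of skew polynomial algebras by isomorphism of the defining matrices: $S_\omega \cong S_{\omega'}$ if and only if $\omega \cong \omega'$. The only remaining work is to translate the relation $\omega \cong \omega'$ (an equality of entries in $k^{\times}$ up to a simultaneous permutation of rows and columns) into the relation $M_\omega \cong M_{\omega'}$ (the corresponding equality of entries in $\ZZ/\ell\ZZ$).

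First I would record the key bijection. Since $\zeta_\ell$ is a fixed primitive $\ell$-th root of unity, the map sending $m \in \ZZ/\ell\ZZ$ to $\zeta_\ell^{m} \in k^{\times}$ is a well-defined injection whose image is exactly the set of $\ell$-th roots of unity in $k$; concretely, $\zeta_\ell^{m} = \zeta_\ell^{m'}$ in $k$ if and only if $m \equiv m' \pmod{\ell}$. Writing $M_\omega = (m_{ij})$ and $M_{\omega'} = (m'_{ij})$, so that $\omega_{ij} = \zeta_\ell^{m_{ij}}$ and $\omega'_{ij} = \zeta_\ell^{m'_{ij}}$, I would then fix a permutation $\sigma \in \mathfrak{S}_n$ and observe that the identity $\omega'_{\sigma(i)\sigma(j)} = \omega_{ij}$ reads $\zeta_\ell^{m'_{\sigma(i)\sigma(j)}} = \zeta_\ell^{m_{ij}}$, which by injectivity of the above map is equivalent to $m'_{\sigma(i)\sigma(j)} = m_{ij}$ in $\ZZ/\ell\ZZ$. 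Quantifying over all $i,j$ shows that the same $\sigma$ witnesses $\omega \cong \omega'$ if and only if it witnesses $M_\omega \cong M_{\omega'}$; hence $\omega \cong \omega'$ if and only if $M_\omega \cong M_{\omega'}$.

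Combining these two equivalences yields $S_\omega \cong S_{\omega'} \Leftrightarrow \omega \cong \omega' \Leftrightarrow M_\omega \cong M_{\omega'}$, as desired. There is essentially no obstacle here: the entire content is that $m \mapsto \zeta_\ell^{m}$ is a bijection, so the two matrix-isomorphism relations are literally the same relation transported along this bijection. The only point demanding (minor) care is to verify that a single permutation $\sigma$ serves on both sides simultaneously, which is immediate from the entrywise equivalence established above.
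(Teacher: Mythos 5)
Your proposal is correct and follows the same route as the paper, which simply cites Proposition~\ref{prop.skewiso}; your write-up just makes explicit the (routine) translation step that the same permutation $\sigma$ witnesses $\omega \cong \omega'$ and $M_\omega \cong M_{\omega'}$, via the injectivity of $m \mapsto \zeta_\ell^m$ on $\ZZ/\ell\ZZ$.
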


\begin{proof}
This follows from Proposition \ref{prop.skewiso}.
\end{proof}

\begin{dfn}\label{dfn:switchmodl}
(1) Let $M=(m_{ij}) \in \Alt_n(\ZZ/\ell\ZZ)$ be a skew-symmetric matrix. 
For $1\leq v \leq n$, let \[X_v=
\begin{pNiceMatrix}[small, first-row, last-col,]
&&&&\text{\tiny $v$-th}&&&&\\
&&\hspace*{4mm}&&1&&\hspace*{4mm}&&\\
&&&&\Vdots&&&&\\
&&&&&&&&\\
&&&&1&&&&\\
-1&\Cdots&&-1&0&-1&\Cdots&-1&\text{\tiny $v$-th}\\
&&&&1&&&&\\
&&&&\Vdots&&&&\\
&&&&1&&&&
\end{pNiceMatrix}\quad \in \Alt_n(\ZZ/\ell\ZZ)
\] 
We define a skew-symmetric matrix $\mu_v (M)=(n_{ij})$ by 
\begin{align*}
&n_{vi}=m_{vi}-1 \quad \text{if} \; i\neq v,\\
&n_{iv}=-n_{vi}=m_{iv}+1 \quad \text{if} \; i\neq v,\\
&n_{ij}=m_{ij} \quad \text{otherwise}.
\end{align*}
That is, 
\[
\mu_v (M)=M+X_v. 
\]
We call $\mu_v(M)$ the \emph{switching} of $M$ at $v$.
We simply write 
$\mu_{v'}\mu_{v}(M)$
for $\mu_{v'}(\mu_{v}(M))$
and
$\mu_{v}^2(M)$
for $\mu_{v}(\mu_{v}(M))$.

(2)  Let $M=(m_{ij}), M'=(m_{ij}') \in \Alt_n(\ZZ/\ell\ZZ)$ be skew-symmetric matrices.
We say that $M$ and $M'$ are \emph{switching equivalent} if there exist $i_1,\dots, i_n \in \ZZ_{>0}$ such that  
$M' \cong \mu_{1}^{i_1}\cdots\mu_{n}^{i_n}(M)$.
\end{dfn}

Note that $\mu_{v}^\ell(M)=M$, $\mu_{1}\cdots \mu_{n}(M)=M$, and $\mu_{v'}\mu_{v}(M)=\mu_{v}\mu_{v'}(M)$ hold. 
Note also that if $\omega=(\zeta_\ell^{m_{ij}}) \in \M_n(k)$, then $\mu_{v,\zeta_\ell}(\omega)$ in the sense of Definition~\ref{def:omega} is given by $(\zeta_\ell^{n_{ij}})$, where $n_{ij}$ are as in Definition \ref{dfn:switchmodl},
i.e., $\mu_v(M_\omega)=M_{\mu_{v,\zeta_\ell}(\omega)}$. 

As a further remark, the switching equivalence of skew-symmetric matrices $M$ and $M'$ is defined by ``$M' \cong \mu_{1}^{i_1}\cdots\mu_{n}^{i_n}(M)$'', not by ``$M' = \mu_{1}^{i_1}\cdots\mu_{n}^{i_n}(M)$''.
This ensures that if two skew-symmetric matrices are isomorphic, then they are switching equivalent.

Throughout this paper, ``\emph{graph}'' means a finite undirected graph without loops and multiple edges. 
\begin{ex}\label{ex.2graph}
We consider the case $\ell=2$.
In this case, a skew-symmetric matrix $M=(m_{ij}) \in \Alt_n(\ZZ/2\ZZ)$ can be identified with the adjacency matrix of the graph $G$ with $V(G)=[n]$, 
where we recall that the \emph{adjacency matrix} $M(G')$ of a graph $G'$ with $V(G')=[n]$  is defined as follows: 
\begin{itemize}
\item $M(G')=(m_{ij}') \in \Alt_n(\ZZ/2\ZZ)$,
\item if there exists no edge between $i$ and $j$ in $G'$, then we define $m'_{ij}=0$,
\item if there exists an edge between $i$ and $j$ in $G'$, then we define $m'_{ij}=m'_{ji}=1$.
\end{itemize}
Let $G(M)$ denote the graph whose adjacency matrix is $M$. Then $G(\mu_v(M))$ is given as follows:
\begin{itemize}
\item the vertex set is $V(G(\mu_v(M)))=V(G(M))=[n]$,
\item the edge set is $E(G(\mu_v(M)))=\{vi \mid vi \not \in E(G(M)), i \neq v\} \cup \{ij \mid ij\in E(G(M)), i\neq v, j\neq v\}$.
\end{itemize}
Therefore, the switching of $M$ at $v$ is the same notion as the switching of $G(M)$ at $v$, introduced by van~Lint and Seidel \cite{vLS}. For example, 
\[
\xymatrix@R=1.8pc@C=10pc{
{\begin{pNiceMatrix}[small]
0&1&1&0\\
1&0&1&1\\
1&1&0&0\\
0&1&0&0
\end{pNiceMatrix}}
\ar@{<->}[r]
\ar@{~>}[d]_-{\txt{\small $\mu_3$}}
&{\xy /r1.75pc/: 
{\xypolygon4{~={90}~*{\xypolynode}~>{}}},
"1";"2"**@{-},
"1";"3"**@{-},
"2";"3"**@{-},
"2";"4"**@{-},
\endxy}
\ar@{~>}[d]^-{\txt{\small $\mu_3$}}
\\
{\begin{pNiceMatrix}[small]
0&1&0&0\\
1&0&0&1\\
0&0&0&1\\
0&1&1&0
\end{pNiceMatrix}}
\ar@{<->}[r]
&{\xy /r1.75pc/: 
{\xypolygon4{~={90}~*{\xypolynode}~>{}}},
"1";"2"**@{-},
"3";"4"**@{-},
"2";"4"**@{-},
\endxy}.
}
\]
\end{ex}

Throughout this paper, ``\emph{digraph}'' means a finite directed graph without loops and multiple directed edges (not allowed opposite directions, either). 

\begin{ex}\label{ex.3graph}
We consider the case $\ell=3$.
In this case, a skew-symmetric matrix $M=(m_{ij}) \in \Alt_n(\ZZ/3\ZZ)$ can be identified with the adjacency matrix of the digraph $G$ with $V(G)=[n]$, where we note that the \emph{adjacency matrix} $M(G')$ of a digraph $G'$ with $V(G')=[n]$ is defined as follows: 
\begin{itemize}
\item $M(G')=(m_{ij}') \in \Alt_n(\ZZ/3\ZZ)$,
\item if there exists no directed edge between $i$ and $j$ in $G'$, then we define $m'_{ij}=0$,
\item if there exists a directed edge from $i$ to $j$ in $G'$, then we define $m'_{ij}=1$ and $m'_{ji}=2$.
\end{itemize}
Let $G(M)$ denote the digraph whose adjacency matrix is $M$. Then $G(\mu_v(M))$ is given as follows: 
\begin{itemize}
\item the vertex set is $V(G(\mu_v(M)))=V(G(M))=[n]$,
\item the edge set is $E(G(\mu_v(M)))=\{vi \mid iv \in E(G(M)), i \neq v\} \cup \{iv \mid iv, vi \not \in E(G(M)), i \neq v\} \cup \{ij \mid ij\in E(G(M)), i\neq v, j\neq v\}$.
\end{itemize}
Therefore, the switching of $M$ at $v$ is the same notion as the switching of $G(M)$ at $v$, studied by Cheng and Wells Jr.\ \cite{CW}. For example,
\[
\xymatrix@R=1.8pc@C=10pc{
{\begin{pNiceMatrix}[small]
0&1&1&0\\
2&0&2&1\\
2&1&0&0\\
0&2&0&0
\end{pNiceMatrix}}
\ar@{<->}[r]
\ar@{~>}[d]_-{\txt{\small $\mu_3$}}
&{\xy /r1.75pc/: 
{\xypolygon4{~={90}~*{\xypolynode}~>{}}},
\ar@{->}"3";"2",
\ar@{->}"1";"3",
\ar@{->}"1";"2",
\ar@{->}"2";"4",
\endxy
}
\ar@{~>}[d]^-{\txt{\small $\mu_3$}}
\\
{\begin{pNiceMatrix}[small]
0&1&2&0\\
2&0&0&1\\
1&0&0&2\\
0&2&1&0
\end{pNiceMatrix}}
\ar@{<->}[r]
&{\xy /r1.75pc/: 
{\xypolygon4{~={90}~*{\xypolynode}~>{}}},
\ar@{->}"3";"1",
\ar@{->}"1";"2",
\ar@{->}"2";"4",
\ar@{->}"4";"3",
\endxy
}.
}
\]
\end{ex}

We give the following observation, which will be used in Section 5. 
\begin{prop}\label{prop:iso}
For any $M \in \Alt_n(\ZZ/\ell\ZZ)$ and $v \in [n]$, there exists a unique $M' \in \Alt_n(\ZZ/\ell\ZZ)$ 
which is given by iterated switching applied to $M$ and whose $v$-th rows and columns are all $0$. 
\end{prop}
\begin{proof}
We may assume $v=1$ without loss of generality. 
Let $M=(m_{ij}) \in \Alt_n(\ZZ/\ell\ZZ)$. Then $M':=M+\sum_{i=2}^n m_{i1}X_i$ satisfies the required condition. 

In what follows, we prove the uniqueness of $M'$. 
Suppose that $M''=M+\sum_{i=1}^n c_iX_i$ also satisfies the condition. 
Note that $X_1+\cdots+X_n=O$, where $O$ stands for the zero matrix.  
Then \[M''-M'=c_1X_1+\sum_{i=2}^n (c_i-m_{i1})X_i=\sum_{i=2}^n (c_i-c_1-m_{i1})X_i.\]
Since the entries in the $1$st row of $M''-M'$ are all $0$, we have $c_i-c_1-m_{i1}=0$, i.e., $m_{i1}=c_i-c_1$ for $i=2,\ldots,n$. 
This implies $M'=M''$. 
\end{proof}

We now give the following combinatorial theorem.
\begin{thm} \label{thm.grmodl}
Let $S_{\omega}$ and $S_{\omega'}$ be standard graded skew polynomial algebras at $\ell$-th roots of unity,
and let $M_{\omega}$ and $M_{\omega'}$ be their associated E-matrices. 
Then $\GrMod S_{\omega}$ and $\GrMod S_{\omega'}$ are equivalent if and only if 
$M_{\omega}$ and $M_{\omega'}$ are switching equivalent.
\end{thm}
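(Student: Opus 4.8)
The plan is to use Theorem~\ref{thm.grmodgen} as the backbone, since it already characterizes the equivalence $\GrMod S_\omega \simeq \GrMod S_{\omega'}$ entirely in terms of the matrices $\omega,\omega'$; everything left to do is to translate its condition into switching of E-matrices. The bridge is the remark recorded in Definition~\ref{dfn:switchmodl}: if $\omega=(\zeta_\ell^{m_{ij}})$, then applying $\mu_{v,\zeta_\ell}$ to $\omega$ produces exactly $(\zeta_\ell^{n_{ij}})$ with $(n_{ij})=\mu_v(M_\omega)$. Since the operation $\mu_{v,\lambda}$ multiplies the off-diagonal $v$-th row by $\lambda^{-1}$ and the $v$-th column by $\lambda$, one has $\mu_{v,\lambda}^{\,c}=\mu_{v,\lambda^c}$, so applying $\mu_{v,\zeta_\ell^{c}}$ to $\omega$ corresponds to applying $\mu_v^{c}$ to $M_\omega$; moreover the operations commute in $v$ on both sides, and the matrix isomorphism $\cong$ is literally the same relabeling. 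Thus the whole proof reduces to matching the scalars $\lambda_i$ of Theorem~\ref{thm.grmodgen} with integer exponents of $\zeta_\ell$.

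For the \emph{if} direction I would start from a switching equivalence $M_{\omega'}\cong\mu_1^{i_1}\cdots\mu_n^{i_n}(M_\omega)$ and set $\lambda_v:=\zeta_\ell^{i_v}$. By the correspondence above this reads $\omega'\cong\mu_{1,\lambda_1}\cdots\mu_{n,\lambda_n}(\omega)$, which is condition~(4) of Theorem~\ref{thm.grmodgen}; hence $\GrMod S_\omega\simeq\GrMod S_{\omega'}$ by $(4)\Rightarrow(1)$.

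The substance is in the \emph{only if} direction, and the hard part is exactly the normalization of scalars. From $\GrMod S_\omega\simeq\GrMod S_{\omega'}$, condition~(2) of Theorem~\ref{thm.grmodgen} supplies a permutation $\sigma$ and units $\lambda_1,\dots,\lambda_n\in k^\times$ with $\omega'_{\sigma(i)\sigma(j)}=\lambda_i^{-1}\lambda_j\,\omega_{ij}$ for all $i<j$. The obstacle is that these $\lambda_i$ are arbitrary elements of $k^\times$, whereas switching only permits the $\ell$-th roots $\zeta_\ell^{c}$. The key observation that removes it is that only the ratios $\lambda_i^{-1}\lambda_j$ enter the formula, and each such ratio is forced to be an $\ell$-th root of unity, since rearranging gives $\lambda_i^{-1}\lambda_j=\omega'_{\sigma(i)\sigma(j)}\,\omega_{ij}^{-1}$, a quotient of $\ell$-th roots of unity. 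In particular $\kappa_j:=\lambda_j/\lambda_1$ is an $\ell$-th root of unity for every $j$, and $\kappa_i^{-1}\kappa_j=\lambda_i^{-1}\lambda_j$, so the same relation holds with $\lambda$ replaced by $\kappa$. Writing $\kappa_v=\zeta_\ell^{c_v}$ and running the correspondence of the first paragraph backwards turns the identity $\omega'_{\sigma(i)\sigma(j)}=\kappa_i^{-1}\kappa_j\,\omega_{ij}$ into $M_{\omega'}\cong\mu_1^{c_1}\cdots\mu_n^{c_n}(M_\omega)$, where $\mu_v^\ell=\id$ lets me take the exponents positive; this is the desired switching equivalence. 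The single routine computation I would defer is that $\mu_{1,\kappa_1}\cdots\mu_{n,\kappa_n}(\omega)$ has $(i,j)$-entry $\kappa_i^{-1}\kappa_j\,\omega_{ij}$ for $i<j$, which is precisely the entrywise tracking already carried out inside the proof of $(3)\Rightarrow(4)$ in Theorem~\ref{thm.grmodgen}.
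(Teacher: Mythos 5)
Your proposal is correct and follows essentially the same route as the paper: both reduce to Theorem~\ref{thm.grmodgen} and observe that, because the entries of $\omega$ and $\omega'$ are $\ell$-th roots of unity, the scalars in conditions (2)/(4) can be normalized to powers of $\zeta_\ell$ (your $\kappa_i=\lambda_i/\lambda_1$ are exactly the scalars $\lambda_i=\omega'_{\sigma(1)\sigma(i)}/\omega_{1i}$ constructed in the paper's proof of $(3)\Rightarrow(4)$), after which the dictionary between $\mu_{v,\zeta_\ell^c}$ on $\omega$ and $\mu_v^c$ on $M_\omega$ finishes the argument. The only difference is that you make explicit the normalization step that the paper's one-line proof leaves implicit.
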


\begin{proof}
$(\Leftarrow)$ If $M_{\omega'} \cong \mu_{1}^{i_1}\cdots\mu_{n}^{i_n}(M_{\omega})$, then we see that
$\omega'\cong\mu_{1,\zeta_{\ell}^{i_1}}\cdots\mu_{n,\zeta_{\ell}^{i_n}}(\omega)$, so $\GrMod S_{\omega}$ is equivalent to $\GrMod S_{\omega'}$ by Theorem \ref{thm.grmodgen}.

$(\Rightarrow)$ If $\GrMod S_{\omega}$ is equivalent to $\GrMod S_{\omega'}$, then there exist $\lambda_1,\dots, \lambda_n \in k^\times$ such that $\omega'\cong \mu_{1,\lambda_1}\cdots\mu_{n,\lambda_n}(\omega)$ by Theorem \ref{thm.grmodgen}. Since $\omega_{ij}$ and $\omega'_{ij}$ are $\ell$-th roots of unity, it follows that $\lambda_i^{-1}\lambda_j$ are $\ell$-th roots of unity for all $1\leq i,j\leq n$. 
We now fix $s\in[n]$ and take $i_j\in\ZZ_{>0}$ to satisfy $\lambda_s^{-1}\lambda_j=\zeta_{\ell}^{i_j}$
for any $1\leq j\leq n$.
Since $\omega'\cong \mu_{1,\lambda_1}\cdots\mu_{n,\lambda_n}(\omega) = \mu_{1,\zeta_{\ell}^{i_1}}\cdots\mu_{n,\zeta_{\ell}^{i_n}}(\omega)$,
we have $M_{\omega'} \cong \mu_{1}^{i_1}\cdots\mu_{n}^{i_n}(M_{\omega})$.
(Note that, similar to Remark~\ref{rem:unique}, $i_1\dots,i_n$ are not unique.)
\end{proof}

\begin{rem}
Theorem \ref{thm.grmodl} is a generalization of $\textnormal{(1)}\Leftrightarrow \textnormal{(4)}$ of \cite[Theorem 4.2]{HU}.
(It should be noted that, for a skew polynomial algebra $S_\omega$ at square roots of unity, the graph associated with $S_\omega$ considered in \cite{HU} is the complement of our graph $M_\omega$; however, it is easy to see that two given graphs are switching equivalent if and only if their complement graphs are switching equivalent.)
\end{rem}

\begin{ex} \label{ex:swsc}
Assume that $\ell \geq 4$. Consider the standard graded skew polynomial algebras
$S_{\omega}$ and $S_{\omega'}$ in $3$ variables at $\ell$-th roots of unity with E-matrices
\[
M_{\omega}=
\begin{pmatrix}
0 &0 &0\\ 
0 &0 &1\\ 
0 &\ell-1 &0\\
\end{pmatrix}
\qquad\qquad
M_{\omega'}=
\begin{pmatrix}
0 &0 &0\\ 
0 &0 &2\\ 
0 &\ell-2 &0\\
\end{pmatrix}
\]
Then one can easily verify that  $M_{\omega}$ and $M_{\omega'}$ are not switching equivalent, so $\GrMod S_{\omega}$ and $\GrMod S_{\omega'}$ are not equivalent by Theorem \ref{thm.grmodl}.
On the other hand, we see that the point simplicial complexes $\Delta_{\omega}$ and $\Delta_{\omega'}$ are coincident by Example \ref{ex:psimp3}. Compare with 
Theorem \ref{thm:pm1} (which is for $\ell=2$) and Theorem \ref{thm.classps} later (which is for $\ell=3$).
\end{ex}

\section{Modular Eulerian matrices}
In this section, we introduce the notion of a matrix in $\Alt_n(\ZZ/\ell\ZZ)$ being a modular Eulerian matrix.
We then discuss the relationship between modular Eulerian matrices
and switching equivalence classes in $\Alt_n(\ZZ/\ell\ZZ)$.

As mentioned in Example \ref{ex.2graph} (resp.\ Example \ref{ex.3graph}), 
a skew-symmetric matrix $M \in \Alt_n(\ZZ/2 \ZZ)$ (resp.\ $M \in \Alt_n(\ZZ/3 \ZZ)$) can be regarded as the adjacency matrix of the graph  (resp.\ digraph) $G(M)$. 

\begin{dfn}
We say that a skew-symmetric matrix $M=(m_{ij}) \in \Alt_n(\ZZ/\ell \ZZ)$ is a \emph{modular Eulerian matrix} if $\sum_{j=1}^n m_{ij} =  0$ holds in $\ZZ/\ell\ZZ$ for every $i$. 
\end{dfn}

\begin{ex}
A matrix $M \in \Alt_n(\ZZ/2 \ZZ)$ is a modular Eulerian matrix if and only if the graph $G(M)$ is an Eulerian graph, i.e., a graph where every vertex has even degree. 
\end{ex}

\begin{ex}
We say that a digraph $G$ is a \emph{modular Eulerian digraph} if $\outdeg v \equiv \indeg v \pmod{3}$ holds for every vertex $v$ of $G$, where $\outdeg v$ (resp. $\indeg v$) denotes the outdegree (resp. indegree) of $v$, i.e., the number of vertices heading from (resp. heading to) $v$. 
A matrix $M \in \Alt_n(\ZZ/3 \ZZ)$ is a modular Eulerian matrix if and only if the digraph $G(M)$ is a modular Eulerian digraph. 
\end{ex}

Recall that a digraph is called \emph{Eulerian} if $\outdeg v =\indeg v$ holds for every vertex $v$. 
Notice that a modular Eulerian digraph is not necessarily an Eulerian digraph. For example, 
\[
\xy /r1.75pc/: 
{\xypolygon4{~={90}~*{\xypolynode}~>{}}},
\ar@{->}"1";"2",
\ar@{->}"1";"3",
\ar@{->}"1";"4",
\ar@{->}"2";"3",
\ar@{->}"4";"3",
\endxy
\]
is a modular Eulerian digraph, while this is not an Eulerian digraph.  

Seidel \cite{S} showed that every switching equivalence class of unlabeled graphs whose number of vertices is odd contains a unique unlabeled Eulerian graph. We generalize this result as follows.

\begin{thm}\label{thm.Eulermodl}
If $n$ is coprime to $\ell$, then each switching equivalence class in $\Alt_n(\ZZ/\ell \ZZ)$ contains exactly one isomorphism class of modular Eulerian matrices.
\end{thm}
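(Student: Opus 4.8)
The plan is to reinterpret switching as the addition of a ``coboundary'' and then reduce the whole statement to a small linear-algebra computation over the ring $\ZZ/\ell\ZZ$, in which the hypothesis $\gcd(n,\ell)=1$ appears precisely as the invertibility of $n$.

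First I would record the additive form of switching. Since each $\mu_v$ merely adds the fixed skew-symmetric matrix $X_v$, the operations $\mu_v$ commute and $\mu_1^{a_1}\cdots\mu_n^{a_n}(M)=M+\sum_{v=1}^n a_v X_v$ for $a_v\in\ZZ/\ell\ZZ$. A direct inspection of $X_v$ shows that for $i\neq j$ the $(i,j)$-entry of $\sum_v a_v X_v$ equals $a_j-a_i$. Hence, writing $a=(a_1,\dots,a_n)\in(\ZZ/\ell\ZZ)^n$ and $(\partial a)_{ij}:=a_j-a_i$, the matrices obtained from $M$ by switching alone (before applying any permutation) are exactly $M+\partial a$.

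Then I would set up the Eulerian condition. For $N=M+\partial a$ put $r_i:=\sum_{j=1}^n m_{ij}$ and $A:=\sum_{j=1}^n a_j$; a short computation gives $\sum_{j=1}^n N_{ij}=r_i+A-na_i$. Thus $N$ is a modular Eulerian matrix if and only if $(nI-J)a=r$, where $I$ is the identity and $J$ the all-ones matrix. For existence I use that skew-symmetry forces $\sum_i r_i=\sum_{i,j}m_{ij}=0$; since $\gcd(n,\ell)=1$ the element $n$ is a unit in $\ZZ/\ell\ZZ$, and the explicit choice $a_i:=n^{-1}r_i$ then satisfies $A=n^{-1}\sum_j r_j=0$ and hence solves $(nI-J)a=r$. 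This produces a modular Eulerian matrix in the switching orbit of $M$. For uniqueness-as-a-matrix I would compute $\ker(nI-J)$: the relation $na_i=A$ for all $i$ forces $n(a_i-a_{i'})=0$, so invertibility of $n$ makes $a$ constant, and conversely constants lie in the kernel. Since $\partial$ annihilates constant vectors, any two solutions $a,a'$ give $\partial a=\partial a'$, so the modular Eulerian representative of the pure-switching orbit $\{M+\partial a\}$ is unique as a matrix; call it $E(M)$.

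Finally I would upgrade this to the stated equivalence-up-to-isomorphism. For $\sigma\in\mathfrak S_n$ let $M^{\sigma}$ denote the matrix isomorphic to $M$ via $\sigma$, so $(M^{\sigma})_{\sigma(i)\sigma(j)}=m_{ij}$. Both switching and the modular Eulerian property are equivariant under $M\mapsto M^{\sigma}$: switching at $v$ corresponds to switching at $\sigma(v)$, and permuting rows and columns permutes the row sums $r_i$ while keeping them all zero. The full switching equivalence class of $M$ is therefore the union over $\sigma\in\mathfrak S_n$ of the pure-switching orbits of the $M^{\sigma}$, and the unique representative transforms as $E(M^{\sigma})=E(M)^{\sigma}$. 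Consequently every modular Eulerian matrix in the class is a permutation of the single matrix $E(M)$, which gives existence and shows that all such matrices form exactly one isomorphism class. I expect the only genuinely delicate point to be the linear algebra over the ring $\ZZ/\ell\ZZ$ rather than over a field -- controlling $\ker(nI-J)$ and the solvability of $(nI-J)a=r$ -- where the coprimality hypothesis is used in an essential way; the equivariance bookkeeping in the last step, which promotes matrix-level uniqueness to isomorphism-level uniqueness, is routine but must be stated with care.
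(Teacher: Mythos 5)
Your proposal is correct and follows essentially the same route as the paper: the paper's existence step applies $\mu_v^{sk}$ with $s=n^{-1}$ to each vertex of row sum $k$, which is exactly your choice $a_i=n^{-1}r_i$, and its uniqueness step likewise reduces to showing that $\ker(nI-J)$ consists of constant vectors, which switching annihilates since $\sum_v X_v=O$. The only difference is organizational --- you phrase both halves as solvability and kernel of the single linear system $(nI-J)a=r$, and you spell out the permutation-equivariance bookkeeping that the paper leaves implicit --- but the mathematical content coincides.
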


\begin{proof} 
\noindent{\bf Existence}: 
Given $M=(m_{ij}) \in \Alt_n(\ZZ/\ell\ZZ)$, let $U_k=\{ i \in [n] \mid \sum_{j=1}^n m_{ij}= k\}$ and $u_k = |U_k|$ for $k=0,1,\ldots,\ell-1$. 
Then we see that $\sum_{k=0}^{\ell-1}ku_k=\sum_{1 \leq i,j \leq n}m_{ij} = 0$ in $\ZZ/\ell\ZZ$. 

Since we assume $n$ is coprime to $\ell$, there exists $s \in [\ell-1]$ such that $sn \equiv 1 \pmod{\ell}$. Let 
\[M'=\prod_{k=1}^{\ell-1}\prod_{v \in U_k}\mu_v^{sk}(M)=M+\sum_{k=1}^{\ell-1}\left( \sum_{v \in U_k}sk X_v \right),\]
where $\prod$ stands for the composition of certain $\mu_v$'s. (See Definition~\ref{dfn:switchmodl}(1) for the second equality.) 
Our goal is to show that $M'$ is a modular Eulerian matrix. 
We have
\begin{align}\label{eq:first}
\sum_{j=1}^n(M')_{uj}&=\sum_{j=1}^n\left(M+\sum_{k=1}^{\ell-1}\left( \sum_{v \in U_k}sk X_v \right)\right)_{uj} 
=\sum_{j=1}^nM_{uj}+\sum_{k=1}^{\ell-1}\left( \sum_{v \in U_k}sk \sum_{j=1}^n(X_v)_{uj} \right).
\end{align}
Here, by definition of $X_v$, we observe that 
\begin{align}\label{eq:observe}\sum_{j=1}^n(X_v)_{ij}=\begin{cases} 1-n &\text{ if }i=v, \\ 1 &\text{ if }i \neq v.  \end{cases}\end{align} 
Therefore, we see the following: if $u \in U_0$, then 
\begin{align*}
\sum_{j=1}^n(M')_{uj}
&\overset{\text{\eqref{eq:first}}}{=}\sum_{j=1}^nM_{uj}+\sum_{k=1}^{\ell-1}\left( \sum_{v \in U_k}sk \sum_{j=1}^n(X_v)_{uj} \right) \\
&\overset{\text{\eqref{eq:observe}}}{=}0+\sum_{k=1}^{\ell-1}\sum_{v \in U_k}sk \cdot 1 =s\sum_{k=0}^{\ell-1}ku_k=0, 
\end{align*}
and if $u \in U_h$ with $h \neq 0$, then 
\begin{align*}
\sum_{j=1}^n(M')_{uj}&\overset{\text{\eqref{eq:first}}}{=}\sum_{j=1}^nM_{uj}+\sum_{\substack{1 \leq k \leq \ell-1 \\ k \neq h}}\left( \sum_{v \in U_k}sk \sum_{j=1}^n(X_v)_{uj} \right) 
+ \sum_{v \in U_h}sh \sum_{j=1}^n(X_v)_{uj}
\\
&\overset{\text{\eqref{eq:observe}}}{=}h+\sum_{\substack{1 \leq k \leq \ell-1 \\ k \neq h}}\sum_{v \in U_k} sk \cdot 1+\left(sh(1-n)+ \sum_{v \in U_h \setminus \{u\}}sh \cdot 1\right) \\
&=h+s\sum_{k=0}^{\ell-1}ku_k +sh(1-n)-sh = h - snh = 0. 
\end{align*}

\noindent
{\bf Uniqueness}: 
Assume that there exist modular Eulerian matrices $M,N \in \Alt_n(\ZZ/\ell\ZZ)$ which are switching equivalent. 
Then there exist $a_1,\ldots,a_n \in \ZZ_{>0}$ such that $N \cong M+\sum_{v =1}^n a_vX_v$ in $\Alt_n(\ZZ/\ell\ZZ)$. 
This implies that there exists a permutation $\sigma \in \mathfrak{S}_n$ such that $N'=M+\sum_{v =1}^n a_vX_v$, 
where $N'=(n_{ij}') \in \Alt_n(\ZZ/\ell\ZZ)$ is defined by $n_{ij}'=n_{\sigma(i)\sigma(j)}$. Notice that since $N$ is modular Eulerian, so is $N'$.
Hence, $\sum_{v =1}^n a_vX_v=N'-M$ must be also a modular Eulerian matrix. 
It follows from \eqref{eq:observe} that $\sum_{\substack{1 \leq v \leq n \\ v \neq w}}a_v+a_w(1-n)=0$ holds in $\ZZ/\ell\ZZ$. Namely, 
\[(J-nI)\begin{pmatrix} a_1 \\ \vdots \\ a_n \end{pmatrix}=\begin{pmatrix} 0 \\ \vdots \\ 0 \end{pmatrix}\] 
holds in $(\ZZ/\ell\ZZ)^n$, where $J$ is the all-one matrix and $I$ is the identity matrix. 
Applying Gaussian elimination to $J-nI$ and using the existence of $n^{-1}$ in $\ZZ/\ell\ZZ$, we obtain that 
$\begin{pmatrix} a_1 &\cdots &a_n\end{pmatrix}^T=\begin{pmatrix}a &\cdots &a\end{pmatrix}^T$ for some $a \in \ZZ/\ell\ZZ$.
Since we have $\sum_{v=1}^nX_v=O$, we conclude that $N \cong M+a(\sum_{v =1}^n X_v)=M$, as required. 
\end{proof}

\begin{ex}
Let $n=7$ and $\ell=4$. Consider the matrix 
\[M=
{\small
\begin{pmatrix}
0 &1 &1 &1 &2 &2 &3 \\ 
3 &0 &1 &1 &1 &2 &2 \\ 
3 &3 &0 &1 &1 &1 &2 \\ 
3 &3 &3 &0 &1 &1 &1 \\ 
2 &3 &3 &3 &0 &1 &1 \\ 
2 &2 &3 &3 &3 &0 &1 \\ 
1 &2 &2 &3 &3 &3 &0 
\end{pmatrix}}
\in \Alt_7(\ZZ/4\ZZ). \]
Then we know how to get the corresponding unique modular Eulerian matrix by the proof of Theorem~\ref{thm.Eulermodl}. 
Working with the same notation as in there, we see that 
\begin{align*}
&U_0=\{4\}, \; U_1=\{5\}, \; U_2=\{1,2,6,7\}, \; U_3=\{3\}, \text{ and }s=3 \;\;\text{(i.e., $3 \cdot 7 = 1$ in $\ZZ/4\ZZ$)}. 
\end{align*}
Hence, $\mu_1^6\mu_2^6\mu_3^9\mu_5^3\mu_6^6\mu_7^6(M)$ should become modular Eulerian. In fact, since 
\[2X_1+2X_2+X_3+3X_5+2X_6+2X_7=
{\small
\begin{pmatrix}
0 &0 &3 &2 &1 &0 &0 \\ 
0 &0 &3 &2 &1 &0 &0 \\ 
1 &1 &0 &3 &2 &1 &1 \\ 
2 &2 &1 &0 &3 &2 &2 \\ 
3 &3 &2 &1 &0 &3 &3 \\ 
0 &0 &3 &2 &1 &0 &0 \\ 
0 &0 &3 &2 &1 &0 &0 
\end{pmatrix}},
\]
where $4X_v=O$ for any $v$, 
we obtain that 
\begin{align*}
\mu_1^6\mu_2^6\mu_3^9\mu_5^3\mu_6^6\mu_7^6(M)=M+2X_1+2X_2+X_3+3X_5+2X_6+2X_7 
=
{\small
\begin{pmatrix}
0 &1 &0 &3 &3 &2 &3 \\ 
3 &0 &0 &3 &2 &2 &2 \\ 
0 &0 &0 &0 &3 &2 &3 \\ 
1 &1 &0 &0 &0 &3 &3 \\ 
1 &2 &1 &0 &0 &0 &0 \\ 
2 &2 &2 &1 &0 &0 &1 \\ 
1 &2 &1 &1 &0 &3 &0 
\end{pmatrix}}.
\end{align*}
\end{ex}

\begin{ex}
Theorem~\ref{thm.Eulermodl} is not true if $n$ is not coprime to $\ell$. 
For example, in the case $\ell=2$ and $n=4$, we have
\[
\xymatrix@R=0.5pc@C=2.0pc{
1 &2& &&1 \ar@{-}[dd]\ar@{-}[rdd]&2\ar@{-}[dd]\ar@{-}[ldd]\\
&&\ar@{~>}[r]_-{\txt{\small $\mu_1\mu_2$}}&&\\
3 &4& &&3&\,4.\\
}
\]
In the case $\ell=3$ and $n=6$, we have  
\[
\xymatrix@R=0.5pc@C=2.0pc{
1 &2&3 &&&1 \ar[dd]\ar[rdd]\ar[rrdd]&2\ar[dd]\ar[ldd]\ar[rdd]&3\ar[lldd]\ar[ldd]\ar[dd]\\
&&&\ar@{~>}[r]_-{\txt{\small $\mu_1\mu_2\mu_3$}}&&&\\
4&5&6& &&4&5&\,6.\\
}
\]
In the case $\ell=4$ and $n=6$, we have  
\[
\xymatrix@R=0.8pc@C=2.4pc{
{\small
\begin{pmatrix}
0 &0 &0 &0 &0 &0 \\
0 &0 &0 &0 &0 &0 \\
0 &0 &0 &0 &0 &0 \\
0 &0 &0 &0 &0 &0 \\
0 &0 &0 &0 &0 &0 \\
0 &0 &0 &0 &0 &0 
\end{pmatrix}}
&\ar@{~>}[r]_-{\txt{\small $\mu_1^2\mu_2^2$}}&&
{\small
\begin{pmatrix}
0 &0 &2 &2 &2 &2 \\
0 &0 &2 &2 &2 &2 \\
2 &2 &0 &0 &0 &0 \\
2 &2 &0 &0 &0 &0 \\
2 &2 &0 &0 &0 &0 \\
2 &2 &0 &0 &0 &0 
\end{pmatrix}}.
}
\]
\end{ex}

On the other hand, if $\ell$ is a prime number, then even if $n$ and $\ell$ are not coprime, we can obtain the following  enumeration result.

\begin{thm}\label{thm.Eulermodlc}
Assume that $\ell$ is a prime number.
Then the number $s_{\ell, n}$ of switching equivalence classes
in $\Alt_n(\ZZ/\ell \ZZ)$ is equal to the number $t_{\ell, n}$ of isomorphism classes of modular Eulerian matrices in $\Alt_n(\ZZ/\ell \ZZ)$.
\end{thm}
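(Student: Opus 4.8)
The plan is to recast both quantities as counts of $\mathfrak{S}_n$-orbits on $\FF_\ell$-vector spaces, and then to prove the two counts agree via Burnside's lemma together with a duality between the relevant representations. Write $V=\Alt_n(\ZZ/\ell\ZZ)$; since $\ell$ is prime, $V$ is a vector space over the field $\FF_\ell=\ZZ/\ell\ZZ$. Let $X_v$ be the matrices of Definition~\ref{dfn:switchmodl}(1) and put $W=\mathrm{span}_{\FF_\ell}\{X_1,\dots,X_n\}$. Since $\mu_v^{i}(M)=M+iX_v$, switching at vertices is exactly translation by $W$, and the permutation (isomorphism) action preserves $W$ because $\sigma$ sends $X_v$ to $X_{\sigma(v)}$. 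Hence switching equivalence classes are precisely the $\mathfrak{S}_n$-orbits on the quotient $\overline V:=V/W$, so $s_{\ell,n}=\#(\mathfrak{S}_n\backslash \overline V)$. On the other side, the modular Eulerian matrices form the subspace $E=\ker r$, where $r\colon V\to \FF_\ell^{\,n}$ sends $M$ to its vector of row sums; $E$ is $\mathfrak{S}_n$-stable and isomorphism classes of modular Eulerian matrices are its $\mathfrak{S}_n$-orbits, so $t_{\ell,n}=\#(\mathfrak{S}_n\backslash E)$.

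The core step is to show that $E$ and $\overline V$ are dual as $\FF_\ell[\mathfrak{S}_n]$-modules. I would introduce the $\mathfrak{S}_n$-equivariant ``coboundary'' map $d\colon \FF_\ell^{\,n}\to V$, $d(a)_{ij}=a_j-a_i$; the computation $(\sum_v a_vX_v)_{ij}=-a_i+a_j$ shows $\Im d=W$ exactly. Equip $V$ and $\FF_\ell^{\,n}$ with their standard nondegenerate $\mathfrak{S}_n$-invariant symmetric forms $\langle M,N\rangle=\sum_{i<j}m_{ij}n_{ij}$ and the dot product. A short direct computation gives the adjunction $\langle d(a),N\rangle=-\langle a,r(N)\rangle$, i.e.\ $r=-d^{*}$. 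Consequently $E=\ker r=(\Im d)^{\perp}=W^{\perp}$, and under the self-duality $V\cong V^{*}$ the annihilator $W^\perp$ is identified with $(V/W)^{*}=\overline V^{*}$, all equivariantly; thus $E\cong \overline V^{*}$ as $\FF_\ell[\mathfrak{S}_n]$-modules.

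Finally I would invoke Burnside's lemma, valid for any finite group acting on any finite set: $\#(\mathfrak{S}_n\backslash E)=\tfrac{1}{n!}\sum_{\sigma}|E^{\sigma}|$ and likewise for $\overline V$. Because these are $\FF_\ell$-spaces, $|E^\sigma|=\ell^{\dim E^\sigma}$ and $|\overline V^\sigma|=\ell^{\dim\overline V^\sigma}$, so it suffices to check $\dim E^{\sigma}=\dim \overline V^{\sigma}$ for every $\sigma$. This follows from the general fact that a module and its dual have fixed spaces of equal dimension: for $g\in GL(U)$ one has $\dim\ker(g-\id)=\dim U-\rank(g-\id)$, while $\rank(g-\id)=\rank(g^{-1}-\id)$ (factor out $g^{-1}$) and $\sigma$ acts on $U^{*}$ by the inverse-transpose, which preserves this rank; applying this with $U=\overline V$ and $E\cong\overline V^{*}$ gives $\dim E^\sigma=\dim\overline V^\sigma$. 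Summing over $\sigma$ yields $t_{\ell,n}=s_{\ell,n}$.

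The main obstacle I anticipate is the identification $E\cong \overline V^{*}$: one must set up the equivariant forms correctly and verify the adjunction $r=-d^{*}$, and one must be careful that this argument genuinely requires $\ell$ prime. It is precisely because $\FF_\ell$ is a field that $W$ and $E$ are honest subspaces, that the fixed sets have cardinality $\ell^{\dim}$, and that the rank identities hold; for composite $\ell$ the cardinalities $|E^\sigma|$ are no longer governed by a single dimension and the method breaks down. Note also that the argument is uniform in $n$ and in particular does not require $\gcd(n,\ell)=1$, so it simultaneously re-proves the equality of counts in the coprime case already implied by Theorem~\ref{thm.Eulermodl}.
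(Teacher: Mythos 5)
Your proof is correct and follows essentially the same route as the paper's: both recast $s_{\ell,n}$ and $t_{\ell,n}$ as $\mathfrak{S}_n$-orbit counts via Burnside's lemma and then identify the space of modular Eulerian matrices with the dual of the switching quotient as $\mathfrak{S}_n$-representations (the paper via the boundary/coboundary maps $\phi$, $\psi$ and a double-dual diagram giving $\Ker\phi \cong (V_1^*/\Im\psi)^*$, you via invariant bilinear forms, the adjunction $r=-d^*$, and $E=W^\perp\cong(V/W)^*$). Your closing step—equating fixed-space dimensions of a representation and its contragredient via $\rank(\sigma-\id)=\rank(\sigma^{-1}-\id)$—is a cleaner justification of the fixed-point count equality that the paper asserts through its explicit isomorphism $g_2^{-1}\circ g_1$, but it is the same underlying fact, so the two arguments coincide in substance.
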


\begin{proof}
Since $\ell$ is prime, $K:=\ZZ/\ell\ZZ$ is a finite field.
Let $V_0$ be a $K$-vector space with basis $\{v_i \mid 1\leq i\leq n\}$ and let $V_1$ be a $K$-vector space with basis $\{e_{ij} \mid 1\leq i<j\leq n\}$. 
We denote by $V_0^*$ (resp.\ $V_1^*$) the dual vector space of $V_0$ (resp.\ $V_1$).
Clearly, $f: \Alt_n(\ZZ/\ell \ZZ) \to V_1^*; \; M=(m_{ij}) \mapsto \sum_{1\leq i<j\leq n}m_{ij}e^{*}_{ij}$ is a bijection. 

Let $\psi: V_0^* \to V_1^*$ be the $K$-linear map defined by $\psi(v_i^*)=\sum_{1\leq h<i}e_{hi}^{*}-\sum_{i<j\leq n}e_{ij}^{*}$. 
Then $\psi(v_i^*)=f(X_i)$ for each $i \in [n]$, where $X_i$ is in the sense of Definition~\ref{dfn:switchmodl}(1).
Thus, for $M, M' \in \Alt_n(\ZZ/\ell \ZZ)$,  we see that $M' = \mu_{1}^{i_1}\cdots\mu_{n}^{i_n}(M)$
for some $i_1,\dots, i_n \in \ZZ_{>0}$ if and only if 
$f(M)-f(M') \in \Im \psi$.

We define the action of ${\mathfrak S}_n$ on $V_1^*/\Im \psi$ by 
$\sigma \left(\overline{\sum_{1\leq i<j\leq n}m_{ij}e^{*}_{ij}}\right)=\overline{\sum_{1\leq i<j\leq n}m_{ij}e^{*}_{\sigma(i)\sigma(j)}}$ for $\sigma \in {\mathfrak S}_n$, where we consider $e^{*}_{\sigma(i)\sigma(j)}=-e^{*}_{\sigma(j)\sigma(i)}$ if $\sigma(i)>\sigma(j)$.
Then it follows from Burnside's lemma that
\begin{align}\label{eq.b1}
s_{\ell, n}=|(V_1^*/\Im \psi)/{\mathfrak S}_n|=\frac{1}{n!}\sum_{\sigma \in {\mathfrak S}_n}|(V_1^*/\Im \psi)^{\sigma}|,
\end{align}
where $(V_1^*/\Im \psi)^{\sigma}$ is the set of elements in $V_1^*/\Im \psi$ that are fixed by $\sigma$. 

Let $\phi: V_1 \to V_0$ be the $K$-linear map defined by $\phi(e_{ij})=v_j-v_i$. Then a bijection 
$g:\Alt_n(\ZZ/\ell \ZZ) \to V_1;\ M=(m_{ij}) \mapsto \sum_{1\leq i<j\leq n}m_{ij}e_{ij}$ 
implies a bijection
\begin{align*}
\{\text{modular Eulerian matrices in $\Alt_n(\ZZ/\ell \ZZ)$}\} \to \Ker \phi
\end{align*}
by restricting $g$ to modular Eulerian matrices. 

We define the action of ${\mathfrak S}_n$ on $\Ker \phi$ by
$\sigma (\sum_{1\leq i<j\leq n}m_{ij}e_{ij})=\sum_{1\leq i<j\leq n}m_{ij}e_{\sigma(i)\sigma(j)}$ for $\sigma \in {\mathfrak S}_n$, where we consider $e_{\sigma(i)\sigma(j)}=-e_{\sigma(j)\sigma(i)}$ if $\sigma(i)>\sigma(j)$. Then it follows from Burnside's lemma that
\begin{align}\label{eq.b2}
t_{\ell, n}=|\Ker \phi/{\mathfrak S}_n|=\frac{1}{n!}\sum_{\sigma \in {\mathfrak S}_n}|(\Ker \phi)^{\sigma}|,
\end{align}
where  $(\Ker \phi)^{\sigma}$ is the set of elements in $\Ker \phi$ that are fixed by $\sigma$. 

By \eqref{eq.b1} and \eqref{eq.b2}, it is enough to show that $|(V_1^*/\Im \psi)^{\sigma}|=|(\Ker \phi)^{\sigma}|$.
Since the diagram
\[
\xymatrix@R=2.5pc@C=4pc{
V_1 \ar[d]^{\text{can.}}_{\cong} \ar[r]^-{\phi} &V_0 \ar[d]^{\text{can.}}_{\cong} \\
V_1^{**} \ar[r]^-{\psi^*} &V_0^{**}
}
\]
commutes, there exists an induced isomorphism $g_1: \Ker \phi \to  (V_1^*/\Im \psi)^*$. 
Let $g_2$ be an isomorphism $V_1^*/\Im \psi \to(V_1^*/\Im \psi)^*$ defined by mapping a basis to its dual basis. Then we have the composition of isomorphisms
\[\xymatrix@R=2pc@C=2pc{
g: \Ker \phi  \ar[r]^-{g_1}  &(V_1^*/\Im \psi)^*  \ar[r]^-{g_2^{-1}} &V_1^*/\Im \psi
}.
\]
One can check that $w=\sigma(w)$ if and only if $g(w)=\sigma(g(w))$
for any $w \in \Ker \phi$, so we get $|(\Ker \phi)^{\sigma}|=|(V_1^*/\Im \psi)^{\sigma}|$.
This concludes the proof.
\end{proof}

Combining with Theorem \ref{thm.Eulermodl}, we obtain the following result.

\begin{cor}\label{cor.Eulermodlc}
If $\ell$ and $n$ are coprime or $\ell$ is prime, then 
the number $s_{\ell,n}$ of switching equivalence classes
in $\Alt_n(\ZZ/\ell \ZZ)$ is equal to the number $t_{\ell,n}$ of isomorphism classes of modular Eulerian matrices in $\Alt_n(\ZZ/\ell \ZZ)$.
\end{cor}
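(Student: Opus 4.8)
The plan is to dispose of this statement by a straightforward case analysis on the two alternative hypotheses, reducing each case to one of the two theorems just established. First I would treat the case in which $\ell$ is prime; here nothing new is needed, since Theorem~\ref{thm.Eulermodlc} asserts precisely that $s_{\ell,n}=t_{\ell,n}$ under the primality hypothesis. So this case is immediate, and the only work lies in the coprime case.

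When $\gcd(\ell,n)=1$ but $\ell$ need not be prime, Theorem~\ref{thm.Eulermodlc} no longer applies, and I would instead extract the desired numerical equality from Theorem~\ref{thm.Eulermodl}. Concretely, I would define a map $\Phi$ from the set of switching equivalence classes in $\Alt_n(\ZZ/\ell\ZZ)$ to the set of isomorphism classes of modular Eulerian matrices by sending a switching class $C$ to the unique isomorphism class of modular Eulerian matrices contained in $C$. This assignment is well defined exactly because Theorem~\ref{thm.Eulermodl} guarantees both the existence and the uniqueness of such an isomorphism class inside each switching class. Surjectivity of $\Phi$ is then clear: any modular Eulerian matrix $M$ lies in its own switching class, which $\Phi$ sends to the isomorphism class of $M$.

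For injectivity I would use the compatibility of isomorphism with switching equivalence. If two switching classes $C_1,C_2$ are sent by $\Phi$ to the same isomorphism class, I would pick modular Eulerian representatives $M_1\in C_1$ and $M_2\in C_2$ with $M_1\cong M_2$; taking all exponents equal to $\ell$ in Definition~\ref{dfn:switchmodl}(2) and invoking $\mu_v^{\ell}=\id$ shows $M_2\cong \mu_1^{\ell}\cdots\mu_n^{\ell}(M_1)$, so $M_1$ and $M_2$ are switching equivalent and hence $C_1=C_2$. Thus $\Phi$ is a bijection and $s_{\ell,n}=t_{\ell,n}$, completing the coprime case.

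There is no genuine obstacle in this corollary, as it is a formal consequence of the two preceding theorems; the only point that warrants a moment's care is the observation that isomorphic skew-symmetric matrices are automatically switching equivalent, which is what makes $\Phi$ injective and which follows at once from the definition of switching equivalence together with $\mu_v^{\ell}=\id$.
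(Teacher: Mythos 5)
Your proof is correct and follows essentially the same route as the paper: the paper simply combines Theorem~\ref{thm.Eulermodlc} (the prime case) with Theorem~\ref{thm.Eulermodl} (the coprime case), leaving the counting bijection implicit, while you spell it out, including the worthwhile observation that isomorphic matrices are switching equivalent via $\mu_1^{\ell}\cdots\mu_n^{\ell}=\id$.
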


\begin{rem}\label{rem:num}
(1) The $\ell=2$ case of Theorem \ref{thm.Eulermodlc}, that is,
the coincidence of 
the number $s_{2,n}$ of switching equivalence classes of graphs and the number $t_{2,n}$ of unlabeled Eulerian graphs, was proved by Seidel \cite{S} for $n$ odd and by
Mallows and Sloane \cite{MS} for all $n$.
In this case, the values of $s_{2,n}=t_{2,n}$ for $1\leq n\leq 11$ are known as
\[
\renewcommand{\arraystretch}{1.2}
\begin{array} {c|cccccccccccccc}
n &1&2&3&4&5&6&7&8&9&10&11 \\ \hline
s_{2,n}=t_{2,n}&1&1&2&3&7&16&54&243&2038&33120&1182004
\end{array}
\]
(see \cite[Table 1]{MS} and  A002854 of OEIS \cite{OE}).

(2) The $\ell=3$ case of Theorem \ref{thm.Eulermodlc}, that is,
the coincidence of 
the number $s_{3,n}$ of switching equivalence classes of digraphs and the number $t_{3,n}$ of unlabeled  modular Eulerian digraphs, was proved by 
Cheng and Wells Jr.\ \cite{CW}.
In this case, the values of $s_{3,n}=t_{3,n}$ for $1\leq n\leq 11$ are known as
\[
\renewcommand{\arraystretch}{1.2}
\begin{array} {c|cccccccccccccc}
n &1&2&3&4&5&6&7&8&9&10&11 \\ \hline
s_{3,n}=t_{3,n}&1&1&2&4&14&120&3222&271287 &64154817 &41653775052&74220906305025 
\end{array}
\]
(see \cite[Table 1]{CW} and  A240973 of OEIS \cite{OE}).
\end{rem}

\begin{ex}
We consider the case where $\ell=4$.
Unlike in Remark \ref{rem:num}, we do not know whether $s_{4,n}=t_{4,n}$ holds or not in general, 
but we know this equality if $n \equiv 1 \text { or }3 \pmod{4}$ by Corollary~\ref{cor.Eulermodlc}.  
The number $t_{4,n}$ of modular Eulerian matrices 
for $1\leq n\leq 6$ can be computed as follows: 
\[
\renewcommand{\arraystretch}{1.2}
\begin{array} {c|cccccc}
n &1&2&3&4&5 &6 \\ \hline
t_{4,n}&1&1&3&8&62 &1760
\end{array}
\]
Corollary~\ref{cor.Eulermodlc} guarantees
$s_{4,1}=t_{4,1}$, $s_{4,3}=t_{4,3}$, and $s_{4,5}=t_{4,5}$.
Moreover, we have checked $s_{4,2}=t_{4,2}$ and $s_{4,4}=t_{4,4}$ by hand. 
We expect $s_{4,n}=t_{4,n}$ for any $n$, though we do not have a proof.
\end{ex}

The discussion in this section yields the following results on skew polynomial algebras.

\begin{thm}
\textnormal{(1)} Let $S_\omega$ be a standard graded skew polynomial algebra in $n$ variables at $\ell$-th roots of unity. If $\ell$ and $n$ are coprime, then there exists a standard graded skew polynomial algebra $S_{\omega'}$ in $n$ variables at $\ell$-th roots of unity such that
\begin{enumerate}
\item[(a)] $\GrMod S_\omega$ is equivalent to $\GrMod S_{\omega'}$ and
\item[(b)] the E-matrix $M_{\omega'}$ is a modular Eulerian matrix in $\Alt_n(\ZZ/\ell \ZZ)$,
\end{enumerate}
which is unique up to isomorphism of graded algebras.

\textnormal{(2)}
If $\ell$ and $n$ are coprime or $\ell$ is prime, then
the number of standard graded skew polynomial algebras in $n$ variables at $\ell$-th roots of unity up to equivalence of graded module categories is equal to the number of isomorphism classes of modular Eulerian matrices in $\Alt_n(\ZZ/\ell \ZZ)$. 
\end{thm}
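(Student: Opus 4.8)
The plan is to transport the whole statement to the level of E-matrices, where Theorems~\ref{thm.grmodl} and~\ref{thm.Eulermodl}, together with Corollary~\ref{cor.Eulermodlc}, already supply all the content. The organizing observation is that $S_\omega \mapsto M_\omega$ realizes a bijection between standard graded skew polynomial algebras in $n$ variables at $\ell$-th roots of unity and matrices in $\Alt_n(\ZZ/\ell\ZZ)$: every $M=(m_{ij})\in\Alt_n(\ZZ/\ell\ZZ)$ is the E-matrix of $S_\omega$ for $\omega=(\zeta_\ell^{m_{ij}})$, which is a legitimate defining matrix since skew-symmetry of $M$ gives $\omega_{ii}=1$ and $\omega_{ij}\omega_{ji}=1$. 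Under this bijection, equivalence of graded module categories corresponds to switching equivalence by Theorem~\ref{thm.grmodl}, while graded algebra isomorphism corresponds to matrix isomorphism $\cong$ by Proposition~\ref{prop.skewisol}.

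For part (1) I would first handle existence. Starting from $M_\omega$ and using that $\ell$ and $n$ are coprime, the Existence part of Theorem~\ref{thm.Eulermodl} produces a modular Eulerian matrix $M'$ switching equivalent to $M_\omega$. Setting $\omega'=(\zeta_\ell^{m'_{ij}})$ with $M'=(m'_{ij})$, the algebra $S_{\omega'}$ has E-matrix $M_{\omega'}=M'$, which is modular Eulerian, establishing (b); and since $M_\omega$ and $M_{\omega'}$ are switching equivalent, Theorem~\ref{thm.grmodl} gives (a). For uniqueness, suppose $S_{\omega''}$ also satisfies (a) and (b). Then $\GrMod S_\omega$ is equivalent to $\GrMod S_{\omega''}$, so by Theorem~\ref{thm.grmodl} the matrices $M_\omega$ and $M_{\omega''}$ are switching equivalent; hence $M_{\omega'}$ and $M_{\omega''}$ are two modular Eulerian matrices lying in the same switching equivalence class. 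The Uniqueness part of Theorem~\ref{thm.Eulermodl} then yields $M_{\omega'}\cong M_{\omega''}$, and Proposition~\ref{prop.skewisol} upgrades this to $S_{\omega'}\cong S_{\omega''}$ as graded algebras, which is exactly the claimed uniqueness.

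For part (2) I would count equivalence classes directly through the bijection above. Because graded algebra isomorphism is subsumed by graded module category equivalence, the set of algebras $S_\omega$ modulo the relation ``$\GrMod S_\omega$ is equivalent to $\GrMod S_{\omega'}$'' is, via Theorem~\ref{thm.grmodl}, in bijection with the set of switching equivalence classes in $\Alt_n(\ZZ/\ell\ZZ)$; thus this number equals $s_{\ell,n}$. By Corollary~\ref{cor.Eulermodlc}, under the hypothesis that $\ell$ and $n$ are coprime or $\ell$ is prime, we have $s_{\ell,n}=t_{\ell,n}$, the number of isomorphism classes of modular Eulerian matrices, which completes the count.

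The routine part is the bookkeeping of the three correspondences (module category equivalence versus switching equivalence, graded algebra isomorphism versus matrix isomorphism, and the realization of every matrix as an E-matrix). The only genuinely delicate point is the uniqueness in part (1): one must avoid conflating ``switching equivalent'' with ``isomorphic,'' and it is precisely the uniqueness clause of Theorem~\ref{thm.Eulermodl}—that a switching class meeting the coprimality hypothesis contains a single isomorphism class of modular Eulerian matrices—that bridges this gap, after which Proposition~\ref{prop.skewisol} converts the matrix-level equality into the graded-algebra-level statement the theorem demands.
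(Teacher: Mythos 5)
Your proposal is correct and follows essentially the same route as the paper: existence and uniqueness in (1) both come from Theorem~\ref{thm.Eulermodl}, translated back to algebras via Theorem~\ref{thm.grmodl} and Proposition~\ref{prop.skewisol}, and (2) is the combination of Theorem~\ref{thm.grmodl} with Corollary~\ref{cor.Eulermodlc}. The paper's proof is just a terser version of yours; your explicit handling of the uniqueness step (switching class $\to$ matrix isomorphism $\to$ graded algebra isomorphism) fills in exactly what the paper leaves implicit.
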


\begin{proof}
(1) Let $M_{\omega}$ be the E-matrix of $S_{\omega}$.
By Theorem \ref{thm.Eulermodl}, there exists a modular Eulerian matrix $N$ in the switching equivalence class of $M_\omega$, which is unique up to isomorphism. Let $S_{\omega'}$ be a standard graded skew polynomial algebra 
defined by $M_{\omega'}=N$. By Proposition \ref{prop.skewisol} and Theorem \ref{thm.grmodl}, $S_{\omega'}$ is as desired.

(2) This is a direct consequence of Theorem \ref{thm.grmodl} and Corollary \ref{cor.Eulermodlc}.
\end{proof}

\section{Point simplicial complexes of skew polynomial algebras at cube roots of unity}

In this section, we study the point simplicial complexes $\Delta_{\omega}$ of standard graded skew polynomial algebras $S_{\omega}$ at cube roots of unity by using digraphs (recall that this is connected to studying the point varieties $\Gamma_{\omega}$). 

As mentioned in Example \ref{ex.3graph}, a skew-symmetric matrix $M \in \Alt_n(\ZZ/3 \ZZ)$ can be regarded as the adjacency matrix of the digraph $G(M)$, so we frequently
use the digraph $G(M_\omega)$ to study the E-matrix $M_\omega \in \Alt_n(\ZZ/3\ZZ)$ of $S_{\omega}$.
Furthermore, for simplicity of notation, we denote a face $\{i_1,i_2,\dots,i_m\}$ of $\Delta_{\omega}$ as $i_1i_2\cdots i_m$.

To examine point simplicial complexes, the following operation on a digraph is useful.

\begin{dfn}
For a given digraph $G$ and its vertex $v$, there exists a unique digraph $\widetilde{G}$ which is given by iterated switching applied to $G$ 
such that $v$ is an isolated vertex in $\widetilde{G}$. (See Proposition~\ref{prop:iso}.) 
We call $\widetilde{G}$ the \emph{isolation} of $G$ at $v$ and write it as $I_v(G)$.
\end{dfn}
For example, let $G$ be the following: \[\xy /r1.75pc/: 
{\xypolygon4{~={90}~*{\xypolynode}~>{}}},
\ar@{->}"1";"2",
\ar@{->}"1";"3",
\ar@{->}"1";"4",
\ar@{->}"2";"3",
\ar@{->}"4";"3",
\endxy\]
Then each of the isolations is as follows: 
\[I_1(G)=
\xy /r1.75pc/: 
{\xypolygon4{~={90}~*{\xypolynode}~>{}}},
\ar@{->}"2";"3",
\ar@{->}"4";"3",
\endxy\quad\quad
I_2(G)=I_4(G)=
\xy /r1.75pc/: 
{\xypolygon4{~={90}~*{\xypolynode}~>{}}},
\ar@{->}"3";"1",
\endxy\quad\quad
I_3(G)=\xy /r1.75pc/: 
{\xypolygon4{~={90}~*{\xypolynode}~>{}}},
\ar@{->}"1";"2",
\ar@{->}"1";"4",
\endxy
\]

First we show the following result; compare with
Theorem \ref{thm:pm1} (which is for $\ell=2$) and Example \ref{ex:swsc} (which is for $\ell\geq4$).

\begin{thm}\label{thm.classps}
Let $S_\omega$ and $S_{\omega'}$ be standard graded skew polynomial algebras in $n$ variables at cube roots of unity. If $n \leq 5$, then the following are equivalent.
\begin{enumerate}
\item[(\rnum{1})] $\GrMod S_{\omega}$ is equivalent to $\GrMod S_{\omega'}$.
\item[(\rnum{2})] $\Delta_{\omega}$ is isomorphic to  $\Delta_{\omega'}$.
\end{enumerate}
\end{thm}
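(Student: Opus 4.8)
The plan is to prove the theorem by reducing the equivalence to a purely combinatorial statement about digraphs and then checking it for $n \leq 5$. The implication $(\rnum{1}) \Rightarrow (\rnum{2})$ is immediate from the general theory already developed: by Proposition~\ref{prop.grps} the equivalence of graded module categories gives $\Gamma_\omega \cong \Gamma_{\omega'}$, and by Proposition~\ref{prop.pssc} this is equivalent to $\Delta_\omega \cong \Delta_{\omega'}$. So the entire content of the theorem lies in the reverse implication $(\rnum{2}) \Rightarrow (\rnum{1})$, and this is where the hypothesis $n \leq 5$ is essential (Example~\ref{ex:swsc} shows it genuinely fails for $\ell \geq 4$, and the stated $n=6$ counterexample shows it fails even for $\ell = 3$ when $n$ is large enough).

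First I would translate both sides into digraph language via the identification of Example~\ref{ex.3graph}, writing $G$ and $G'$ for the digraphs of $M_\omega, M_{\omega'} \in \Alt_n(\ZZ/3\ZZ)$. By Theorem~\ref{thm.grmodl}, condition $(\rnum{1})$ is equivalent to $G$ and $G'$ being switching equivalent. On the combinatorial side, recall from the definition of $\Delta_\omega$ that a triple $\{i,j,h\}$ is a non-face precisely when $\omega_{ij}\omega_{jh}\omega_{hi} \neq 1$, i.e.\ when $m_{ij}+m_{jh}+m_{hi} \not\equiv 0 \pmod 3$; this ``triangle sum'' is exactly a switching invariant (the quantity appearing in Theorem~\ref{thm.grmodgen}(3)), so $\Delta_\omega$ records, for each triple of vertices, whether the cyclic sum of edge-labels vanishes mod $3$. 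Thus the task becomes: \emph{for $n \leq 5$, if two digraphs on $[n]$ have isomorphic point simplicial complexes, then they are switching equivalent.} The natural strategy is to invoke Theorem~\ref{thm:introEuler}/Theorem~\ref{thm.Eulermodl} (valid here whenever $3 \nmid n$, i.e.\ for $n \in \{1,2,4,5\}$) to replace each switching class by its unique modular Eulerian representative, and then argue that the point simplicial complex determines that representative up to isomorphism. For the single exceptional value $n=3$ one reads off the answer directly from Example~\ref{ex:psimp3}, which shows $\Delta_\omega$ is determined solely by whether $\omega_{12}\omega_{23}\omega_{31}=1$, matching the two switching classes in $\Alt_3(\ZZ/3\ZZ)$.

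The main obstacle is the reconstruction step: showing that the point simplicial complex $\Delta_\omega$ carries enough information to pin down the switching class when $n \leq 5$. My plan here is to organize the argument by the isomorphism type of $\Delta_\omega$ (equivalently, by which triples are non-faces), using the \emph{isolation} operation $I_v$ introduced just before the theorem to normalize a chosen vertex to be isolated, thereby reducing an $n$-vertex problem to an $(n-1)$-vertex problem together with the combinatorial constraint recorded by the faces through $v$. For each small $n$ the number of possible non-face patterns is bounded, and the tabulated switching class counts $s_{3,n} = 1,1,2,4,14$ for $n = 1,\dots,5$ from Remark~\ref{rem:num}(2) give a finite checklist: I would verify that the map from switching classes to isomorphism classes of point simplicial complexes is injective by confirming that the corresponding list of $\Delta_\omega$'s are pairwise non-isomorphic. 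The delicate point is that this injectivity is exactly what breaks at $n=6$, so the case analysis must be genuinely tight rather than merely suggestive; the hardest single case is likely $n=5$, where $14$ switching classes must be separated by their simplicial complexes, and I expect this to require a careful facet-by-facet comparison rather than a slick invariant, possibly assisted by the isolation normalization and a computer-verified enumeration of the modular Eulerian representatives.
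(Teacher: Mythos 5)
Your proposal follows essentially the same route as the paper's proof: $(\rnum{1})\Rightarrow(\rnum{2})$ via Propositions \ref{prop.grps} and \ref{prop.pssc}; for the converse, reduction via Theorem \ref{thm.grmodl} to switching equivalence of $M_\omega$ and $M_{\omega'}$, passage to the unique modular Eulerian representatives via Theorem \ref{thm.Eulermodl} for $n\in\{1,2,4,5\}$ with $n=3$ handled directly by Example \ref{ex:psimp3}, and then a finite pairwise comparison of point simplicial complexes ($4$ classes for $n=4$, $14$ for $n=5$). The paper's proof is exactly this plan carried out explicitly---it lists the modular Eulerian digraphs and their facet sets and verifies pairwise non-isomorphism---so the only thing your write-up defers is that concrete enumeration, which is the intended (and unavoidable) finite check.
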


\begin{proof}
By Propositions \ref{prop.grps} and \ref{prop.pssc}, we have only to show $\textnormal{(\rnum{2})} \Rightarrow \textnormal{(\rnum{1})}$. By Theorem \ref{thm.grmodl}, it is enough to verify that
\begin{align}\tag{$\star$} \label{eq.s}
\text{if $M_\omega$ is not switching equivalent to $M_{\omega'}$, then $\Delta_{\omega}$ is not isomorphic to  $\Delta_{\omega'}$.}
\end{align}

If $n$ is $1$ or $2$, then there exists only one switching equivalence class of digraphs on with $n$ vertices, so \eqref{eq.s} is trivially true.

We consider the case $n=3$. Then $M_{\omega}$ is switching equivalent to exactly one of the adjacency matrices of the following digraphs: 
\begin{center}
(1) $\xy /r1.75pc/: 
{\xypolygon3{~={90}~*{\xypolynode}~>{}}},
\endxy$
\qquad\qquad
(2) $\xy /r1.75pc/: 
{\xypolygon3{~={90}~*{\xypolynode}~>{}}},
\ar@{->}"1";"2",
\endxy$
\end{center}
In each of the above cases, the point simplicial complex $\Delta_{\omega}$ is given as follows: 
\begin{enumerate}
\item $\calF(\Delta_{\omega})=\{123\}$ 
\item $\calF(\Delta_{\omega})=\{12,13,23\}$. 
\end{enumerate}
These two simplicial complexes are not isomorphic, so \eqref{eq.s} is true.

We consider the case $n=4$.
There are four modular Eulerian digraphs with $4$ vertices up to isomorphism (see Remark \ref{rem:num}(2)).
Namely, they are given as follows; so $M_{\omega}$ is switching equivalent to exactly one of the adjacency matrices of the following digraphs by Theorem \ref{thm.Eulermodl}:

\begin{center}
(1) $\xy /r1.75pc/: 
{\xypolygon4{~={90}~*{\xypolynode}~>{}}},
\endxy$
\qquad\quad
(2) $\xy /r1.75pc/: 
{\xypolygon4{~={90}~*{\xypolynode}~>{}}},
\ar@{->}"1";"2",
\ar@{->}"2";"3",
\ar@{->}"3";"1",
\endxy$
\qquad\quad
(3) $\xy /r1.75pc/: 
{\xypolygon4{~={90}~*{\xypolynode}~>{}}},
\ar@{->}"1";"2",
\ar@{->}"2";"3",
\ar@{->}"3";"4",
\ar@{->}"4";"1",
\endxy$
\qquad\quad
(4) $\xy /r1.75pc/: 
{\xypolygon4{~={90}~*{\xypolynode}~>{}}},
\ar@{->}"1";"2",
\ar@{->}"1";"3",
\ar@{->}"1";"4",
\ar@{->}"2";"3",
\ar@{->}"4";"3",
\endxy$
\end{center}
In each of the above cases, the point simplicial complex $\Delta_{\omega}$ is given as follows\footnote{In \cite[Section 4.2]{BDL} (resp.\,\cite[Section 4.3]{BDL}),  the classification of the point varieties (in other words, the point simplicial complexes) of skew polynomial algebras in $4$ (resp.\,$5$) variables was given.
The labels used here are the same as those used in that classification. }:
\begin{enumerate}
\item[(1)] $\calF(\Delta_{\omega})=\{1234\}$ \quad (label: $0$)
\item[(2)] $\calF(\Delta_{\omega})=\{123, 14, 24, 34\}$ \quad (label: $2$)
\item[(3)] $\calF(\Delta_{\omega})=\{12, 13, 14, 23, 24, 34\}$ \quad (label: $3$).  
\item[(4)] $\calF(\Delta_{\omega})=\{124, 234, 13\}$ \quad (label: $1$)
\end{enumerate}
These four simplicial complexes are not isomorphic to each other, so \eqref{eq.s} is true.

We consider the case $n=5$. There are fourteen modular Eulerian digraphs with $5$ vertices up to isomorphism (see Remark \ref{rem:num}(2)).
Namely, they are given as follows; so $M_{\omega}$ is switching equivalent to exactly one of the adjacency matrices of the following digraphs by Theorem \ref{thm.Eulermodl}:

\begin{center}
(1) $\xy /r1.8pc/: 
{\xypolygon5{~={90}~*{\xypolynode}~>{}}},
\endxy$
\qquad\quad
(2) $\xy /r1.8pc/: 
{\xypolygon5{~={90}~*{\xypolynode}~>{}}},
\ar@{->}"1";"2",
\ar@{->}"2";"3",
\ar@{->}"3";"1",
\endxy$
\qquad\quad
(3) $\xy /r1.8pc/: 
{\xypolygon5{~={90}~*{\xypolynode}~>{}}},
\ar@{->}"1";"2",
\ar@{->}"2";"3",
\ar@{->}"3";"4",
\ar@{->}"4";"1",
\endxy$
\qquad\quad
(4) $\xy /r1.8pc/: 
{\xypolygon5{~={90}~*{\xypolynode}~>{}}},
\ar@{->}"1";"2",
\ar@{->}"2";"3",
\ar@{->}"3";"4",
\ar@{->}"4";"5",
\ar@{->}"5";"1",
\endxy$
\\
(5) $\xy /r1.8pc/: 
{\xypolygon5{~={90}~*{\xypolynode}~>{}}},
\ar@{->}"1";"2",
\ar@{->}"2";"3",
\ar@{->}"3";"1",
\ar@{->}"1";"4",
\ar@{->}"4";"5",
\ar@{->}"5";"1",
\endxy$
\qquad\quad
(6) $\xy /r1.8pc/: 
{\xypolygon5{~={90}~*{\xypolynode}~>{}}},
\ar@{->}"2";"1",
\ar@{->}"2";"5",
\ar@{->}"1";"5",
\ar@{->}"3";"2",
\ar@{->}"5";"3",
\ar@{->}"4";"2",
\ar@{->}"5";"4",
\endxy$
\qquad\quad
(7) $\xy /r1.8pc/: 
{\xypolygon5{~={90}~*{\xypolynode}~>{}}},
\ar@{->}"1";"2",
\ar@{->}"2";"3",
\ar@{->}"3";"4",
\ar@{->}"4";"5",
\ar@{->}"5";"1",
\ar@{->}"1";"3",
\ar@{->}"3";"5",
\ar@{->}"5";"2",
\ar@{->}"2";"4",
\ar@{->}"4";"1",
\endxy$
\\
(8) $\xy /r1.8pc/: 
{\xypolygon5{~={90}~*{\xypolynode}~>{}}},
\ar@{->}"1";"2",
\ar@{->}"2";"3",
\ar@{->}"1";"3",
\ar@{->}"1";"4",
\ar@{->}"4";"3",
\endxy$
\qquad\quad
(9) $\xy /r1.8pc/: 
{\xypolygon5{~={90}~*{\xypolynode}~>{}}},
\ar@{->}"2";"1",
\ar@{->}"1";"5",
\ar@{->}"2";"5",
\ar@{->}"2";"3",
\ar@{->}"3";"4",
\ar@{->}"4";"5",
\endxy$
\qquad\quad
(10) $\xy /r1.8pc/: 
{\xypolygon5{~={90}~*{\xypolynode}~>{}}},
\ar@{->}"2";"1",
\ar@{->}"2";"3",
\ar@{->}"2";"4",
\ar@{->}"1";"5",
\ar@{->}"3";"5",
\ar@{->}"4";"5",
\endxy$
\qquad\quad
(11) $\xy /r1.8pc/: 
{\xypolygon5{~={90}~*{\xypolynode}~>{}}},
\ar@{->}"2";"1",
\ar@{->}"3";"2",
\ar@{->}"3";"1",
\ar@{->}"1";"5",
\ar@{->}"5";"4",
\ar@{->}"1";"4",
\ar@{->}"3";"4",
\endxy$
\\
(12) $\xy /r1.8pc/: 
{\xypolygon5{~={90}~*{\xypolynode}~>{}}},
\ar@{->}"1";"3",
\ar@{->}"1";"5",
\ar@{->}"2";"1",
\ar@{->}"2";"5",
\ar@{->}"2";"3",
\ar@{->}"4";"5",
\ar@{->}"4";"1",
\ar@{->}"4";"3",
\endxy$
\qquad\quad
(13) $\xy /r1.8pc/: 
{\xypolygon5{~={90}~*{\xypolynode}~>{}}},
\ar@{->}"1";"2",
\ar@{->}"2";"4",
\ar@{->}"2";"5",
\ar@{->}"3";"2",
\ar@{->}"3";"4",
\ar@{->}"3";"5",
\ar@{->}"5";"1",
\ar@{->}"5";"4",
\endxy$
\qquad\quad
(14) $\xy /r1.8pc/: 
{\xypolygon5{~={90}~*{\xypolynode}~>{}}},
\ar@{->}"1";"3",
\ar@{->}"1";"5",
\ar@{->}"2";"1",
\ar@{->}"2";"3",
\ar@{->}"2";"4",
\ar@{->}"3";"4",
\ar@{->}"3";"5",
\ar@{->}"4";"5",
\ar@{->}"4";"1",
\endxy$
\end{center}
In each of the above cases, the point simplicial complex $\Delta_{\omega}$ is given as follows\footnotemark[1]:
\begin{enumerate}
\item[(1)] $\calF(\Delta_{\omega})=\{12345\}$
\quad (label: $0$) 
\item[(2)] $\calF(\Delta_{\omega})=\{123, 145, 245, 345\}$ 
\quad (label: $2_a$)
\item[(3)] 
$\calF(\Delta_{\omega})=\{135, 245, 12, 14, 23, 34\}$ 
\quad (label: $4_b$)
\item[(4)] 
$\calF(\Delta_{\omega})=\{12, 13, 14, 15, 23, 24, 25, 34, 35, 45\}$
\quad (label: $6$)
\item[(5)] 
$\calF(\Delta_{\omega})=\{123, 124, 135, 145, 25, 34\}$
\quad (label: $2_b$)
\item[(6)] 
$\calF(\Delta_{\omega})=\{2345, 134, 12, 15\}$ 
\quad (label: $2_c$)
\item[(7)] 
$\calF(\Delta_{\omega})=\{124, 134, 135, 235, 245\}$
\quad (label: $1_b$)
\item[(8)] 
$\calF(\Delta_{\omega})=\{124, 234, 245, 13, 15, 35\}$
\quad (label: $3_c$)
\item[(9)] 
$\calF(\Delta_{\omega})=\{123, 145, 235, 245, 34\}$
\quad (label: $2_d$) 
\item[(10)] 
$\calF(\Delta_{\omega})=\{1234, 1345, 25\}$
\quad (label: $1_c$)
\item[(11)] 
$\calF(\Delta_{\omega})=\{234, 345, 12, 13, 14, 15, 25\}$
\quad (label: $4_a$)
\item[(12)] 
$\calF(\Delta_{\omega})=\{2345, 124, 135\}$ 
\quad (label: $1_a$)
\item[(13)] 
$\calF(\Delta_{\omega})=\{123, 125, 145, 24, 34, 35\}$
\quad (label: $3_a$)
\item[(14)]
$\calF(\Delta_{\omega})=\{134, 12, 15, 23, 24, 25, 35, 45\}$ 
\quad (label: $5$)
\end{enumerate}
These fourteen simplicial complexes are not isomorphic to each other, so \eqref{eq.s} is true.

Hence the proof is complete. 
\end{proof}

\begin{ex}\label{ex.simpcomp}
If we remove the assumption that $n\leq 5$, there exist counterexamples to the statement of Theorem \ref{thm.classps}.

(1) Let us consider the following case when $n=6$:
\begin{align*}
G(M_{\omega})=\xy /r2pc/: 
{\xypolygon6{~={90}~*{\xypolynode}~>{}}},
\ar@{->}"2";"1",
\ar@{->}"3";"1",
\endxy
\qquad\qquad
G(M_{\omega'})=\xy /r2pc/: 
{\xypolygon6{~={90}~*{\xypolynode}~>{}}},
\ar@{->}"1";"2",
\ar@{->}"1";"3",
\endxy
\end{align*}
Then we can straightforwardly check that $\calF(\Delta_{\omega}) = \calF(\Delta_{\omega'})=\{23456, 1456, 123\}$. 
In particular, $\Delta_\omega=\Delta_{\omega'}$. 

We prove that $M_{\omega}$ and $M_{\omega'}$ are not switching equivalent. 
Suppose that $M_{\omega}$ is switching equivalent to $M_{\omega'}$.
Since $G(M_{\omega'})$ has an isolated vertex, it must be isomorphic to an isolation $I_v(G(M_{\omega}))$ for some $v \in V(G(M_{\omega}))$. 
Now all isolations of $G(M_\omega)$ are as follows: 
\begin{align*}
\xy /r2pc/: 
{\xypolygon6{~={90}~*{\xypolynode}~>{}}},
\ar@{->}"2";"4",
\ar@{->}"2";"5",
\ar@{->}"2";"6",
\ar@{->}"3";"4",
\ar@{->}"3";"5",
\ar@{->}"3";"6",
\endxy
\qquad\quad
\xy /r2pc/: 
{\xypolygon6{~={90}~*{\xypolynode}~>{}}},
\ar@{->}"1";"6",
\ar@{->}"1";"4",
\ar@{->}"1";"5",
\endxy
\qquad\quad
\xy /r2pc/: 
{\xypolygon6{~={90}~*{\xypolynode}~>{}}},
\ar@{->}"2";"1",
\ar@{->}"3";"1",
\endxy
\end{align*}
Clearly, none of those is isomorphic to $G(M_{\omega'})$. This is a contradiction. 
Therefore, we conclude that $M_{\omega}$ is not switching equivalent to $M_{\omega'}$, and thus $\GrMod S_{\omega}$ is not equivalent to $\GrMod S_{\omega'}$.

(2)
Let us consider the following case when $n=7$: 
\[
\xymatrix@R=1.0pc@C=2.0pc{
&1\\
G(M_{\omega})=\hspace*{-5mm}&2\ar[u]&3\ar[ul]\ar[l]\ar[r]&4\ar@/_12pt/[ull]\\
&5\ar[r]\ar[ur]\ar@/^12pt/[uu]&6\ar[luu]\ar[u]&7\ar[l]\ar[lu] \ar@/_12pt/[lluu]
}
\qquad\quad 
\xymatrix@R=1.0pc@C=2.0pc{
&1\ar[d]\ar[rd]\ar[rrd]\\
G(M_{\omega'})=\hspace*{-7mm}&2\ar[r]&3&4\ar[l]\\
&5\ar[u]\ar[rru]\ar[r]&6\ar[lu]\ar[ru]&7\ar[u]\ar[l]\ar[ull]
}
\]
Then one can check that $\calF(\Delta_{\omega}) = \calF(\Delta_{\omega'})=\{
12457,
1246,
234, 357, 567,
13, 36
\}$. 
In particular, $\Delta_\omega=\Delta_{\omega'}$. 

Since both $G(M_{\omega})$ and $G(M_{\omega'})$ are non-isomorphic modular Eulerian digraphs, it follows from Theorem \ref{thm.Eulermodl}
that $M_{\omega}$ and $M_{\omega'}$ are not switching equivalent, so $\GrMod S_{\omega}$ and $\GrMod S_{\omega'}$ are not equivalent.
\end{ex}

The point simplicial complex $\Delta_{\omega}$ can be viewed as a collection of information on the underlying graphs of isolations of $M_\omega$.

\begin{prop}\label{prop:isol1}
Let $S_\omega$ and $S_{\omega'}$ be standard graded skew polynomial algebras in $n$ variables at cube roots of unity. Then the following are equivalent.
\begin{enumerate}
\item[(\rnum{1})] $\Delta_{\omega}$ is isomorphic to  $\Delta_{\omega'}$.
\item[(\rnum{2})] There exists a permutation $\sigma \in \mathfrak{S}_n$ such that $\sigma$ induces an isomorphisms between the underlying graphs of $I_v(G(M_\omega))$ and $I_{\sigma(v)}(G(M_{\omega'}))$ for every $v \in [n]$.
\end{enumerate}
\end{prop}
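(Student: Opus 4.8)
The plan is to convert everything into the arithmetic of the E-matrices and to exploit the fact that $\Delta_\omega$ is completely determined by its faces of size $3$. Writing $M_\omega=(m_{ij})\in\Alt_n(\ZZ/3\ZZ)$, the defining condition reads: a triple $\{i,j,h\}$ is a face of $\Delta_\omega$ if and only if $m_{ij}+m_{jh}+m_{hi}\equiv 0 \pmod 3$. Since every subset of size at most $2$ is automatically a face, and a set $F$ with $|F|\ge 4$ is a face if and only if all of its $3$-subsets are, the entire complex $\Delta_\omega$ is recovered from the collection of its $3$-element faces. I would first record the resulting reformulation of simplicial isomorphism: checking (routinely, from the facet-based definition) that a vertex bijection preserving facets is the same as one preserving all faces, one sees that for $\sigma\in\mathfrak{S}_n$, the map $\sigma$ is an isomorphism $\Delta_\omega\to\Delta_{\omega'}$ if and only if $\{i,j,h\}\in\Delta_\omega\Leftrightarrow\{\sigma(i),\sigma(j),\sigma(h)\}\in\Delta_{\omega'}$ for every triple.

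The key computation is to read off the underlying graph of $I_v(M_\omega)$ in terms of $\Delta_\omega$. I would first note that the ``triple sum'' $m_{ij}+m_{jh}+m_{hi}$ is a switching invariant: adding any $X_w$ leaves every such sum unchanged, since $\bigl(\sum_w a_w X_w\bigr)_{ij}=a_j-a_i$ telescopes to $0$ around a triangle. Now write $I_v(M_\omega)=(m'_{ij})$; by definition $v$ is isolated, so $m'_{vj}=m'_{jv}=0$ for all $j$. Applying triple-sum invariance to the triangle $\{v,j,h\}$ gives, for $j,h\neq v$,
\[
m'_{jh}=m'_{vj}+m'_{jh}+m'_{hv}=m_{vj}+m_{jh}+m_{hv}.
\]
Hence the underlying graph of $I_v(M_\omega)$ has an edge $\{j,h\}$ (with $j,h\neq v$) exactly when $m_{vj}+m_{jh}+m_{hv}\not\equiv 0$, that is, exactly when $\{v,j,h\}\notin\Delta_\omega$.

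With this dictionary the equivalence follows. For a fixed $v$, a permutation $\sigma$ induces an isomorphism between the underlying graph of $I_v(M_\omega)$ and that of $I_{\sigma(v)}(M_{\omega'})$ precisely when $\{v,j,h\}\in\Delta_\omega\Leftrightarrow\{\sigma(v),\sigma(j),\sigma(h)\}\in\Delta_{\omega'}$ for all $j,h\neq v$ (the vertex $v$ is isolated on both sides and is matched to $\sigma(v)$, so all information sits in the edges among the remaining vertices). Therefore one $\sigma$ works simultaneously for every $v$ if and only if $\{i,j,h\}\in\Delta_\omega\Leftrightarrow\{\sigma(i),\sigma(j),\sigma(h)\}\in\Delta_{\omega'}$ for every triple $\{i,j,h\}$, which by the first paragraph is exactly the statement that $\sigma$ is an isomorphism $\Delta_\omega\to\Delta_{\omega'}$. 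This proves $\textnormal{(\rnum{1})}\Leftrightarrow\textnormal{(\rnum{2})}$.

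The only genuinely delicate point, and the step I would treat most carefully, is the role of the \emph{uniform} $\sigma$: condition (\rnum{2}) demands a single permutation matching all the isolations at once, and it is precisely this simultaneity that upgrades a family of graph isomorphisms into one simplicial isomorphism. Were a different permutation allowed for each $v$, one would only obtain, for each $v$, a matching of the ``link through $v$'', which is strictly weaker; so I would thread the same $\sigma$ through the entire chain of equivalences and state the quantifier over triples explicitly.
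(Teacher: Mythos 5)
Your proposal is correct and takes essentially the same approach as the paper: both proofs rest on the dictionary that $\{j,h\}$ is an edge of the underlying graph of $I_v(M_\omega)$ exactly when $\{v,j,h\}$ is not a face of $\Delta_\omega$, and both thread a single permutation $\sigma$ through all isolations simultaneously. Your version simply makes explicit, via the switching-invariance of the triple sums $m_{ij}+m_{jh}+m_{hi}$ and the reduction of simplicial isomorphism to preservation of $3$-element faces, what the paper's proof states in words.
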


\begin{proof}
(1) $\Rightarrow$ (2): Let $\sigma \in \mathfrak{S}_n$ be a permutation inducing an isomorphism between $\Delta_\omega$ and $\Delta_{\omega'}$. 
Fix an arbitrary $v \in [n]$ and let $v'=\sigma(v)$. Our goal is to show that the underlying graphs of $I_v(G(M_\omega))$ and $I_{v'}(G(M_{\omega'}))$ are isomorphic. 
Let $a,b \in [n]$ with $a \neq b$. 
\begin{itemize}
\item If $v \in \{a,b\}$, since $v$ is an isolated vertex, there is no directed edge between $a$ and $b$. 
Since $v'$ is also an isolated vertex, there is no directed edge between $\sigma(a)$ and $\sigma(b)$, either. 
\item Let $v \not\in \{a,b\}$. We notice that whether $abv$ is a face of $\Delta_\omega$ is equivalent to whether $\sigma(a)\sigma(b)v'$ is a face of $\Delta_{\omega'}$. 
Since $v$ is an isolated vertex, the adjacency of $a$ and $b$ in the underlying graph of $I_v(G(M_\omega))$ 
is determined by whether $abv$ is a face in $\Delta_\omega$ or not. 
This implies that the adjacency of $a$ and $b$ is equivalent to that of $\sigma(a)$ and $\sigma(b)$ in the underlying graph of $I_{v'}(G(M_{\omega'}))$. 
\end{itemize}
These mean that $\sigma$ induces an isomorphism between the underlying graphs of $I_v(G(M_\omega))$ and $I_{v'}(G(M_{\omega'}))$.  

(2) $\Rightarrow$ (1): Let $\sigma$ be a permutation inducing an isomorphism 
between the underlying graphs of $I_v(G(M_\omega))$ and $I_{\sigma(v)}(G(M_{\omega'}))$ for every $v \in [n]$. 

For any distinct $a,b,c \in [n]$, consider $I_c(G(M_\omega))$. 
Since the adjacency of $a$ and $b$ in $I_c(G(M_\omega))$ is equivalent to that of $\sigma(a)$ and $\sigma(b)$ in $I_{\sigma(c)}(G(M_{\omega'}))$, 
we obtain that whether $abc$ is a face of $\Delta_\omega$ is equivalent to whether $\sigma(a)\sigma(b)\sigma(c)$ is a face of $\Delta_{\omega'}$, as required. 
\end{proof}

For a (di)graph $G$, we say that a subset $T$ of the vertex set $V(G)$ is an \emph{independent set} if there is no edge between $u$ and $v$ for any distinct $u,v \in T$. 
The \emph{independence number} $\alpha(G)$ of $G$ is defined by
$\alpha(G)=\max\{|T| \mid T \text{ is an independent set of }G\}$. 
The facets and the dimension of $\Delta_\omega$ (which corresponds to the irreducible components and the dimension of the point variety $\Gamma_\omega$, respectively) 
can be calculated from isolations of $M_\omega$ as follows. 

\begin{prop}\label{prop:isol2}
Let $S_\omega$ be a standard graded skew polynomial algebra in $n$ variables at cube roots of unity. 
Then $\Delta_\omega$ consists of the independent sets of $I_v(G(M_\omega))$ for all $v \in [n]$.
In particular, we have
\[
\dim \Delta_\omega
= \max\{\alpha(I_v(G(M_\omega))) \mid v \in [n]\} - 1. 
\]
\end{prop}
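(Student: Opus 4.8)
The plan is to first record that the point simplicial complex is a switching invariant, and then to reduce the defining triple condition of $\Delta_\omega$ to a pairwise (adjacency) condition by exploiting the isolated vertex in each $I_v(M_\omega)$. Recall that, writing $M_\omega=(m_{ij})$, a subset $F\subset[n]$ is a face of $\Delta_\omega$ exactly when $m_{ij}+m_{jh}+m_{hi}=0$ in $\ZZ/3\ZZ$ for all distinct $i,j,h\in F$. First I would check that this quantity is unchanged by a switching $\mu_v$: by the definition of $X_v$, switching at $v$ alters $m_{vi}$ and $m_{iv}$ by $\mp 1$, and for any triple $\{i,j,h\}$ containing $v$ these two changes cancel in the cyclic sum, while triples avoiding $v$ are untouched. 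Hence $\Delta_\omega=\Delta_{I_v(M_\omega)}$ for every $v\in[n]$, so I may compute faces using any isolation.

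Next I would prove the asserted equality $\Delta_\omega=\bigcup_{v\in[n]}\{\,T\subset[n]\mid T \text{ is an independent set of } I_v(M_\omega)\,\}$ by two inclusions. For $\supseteq$, if $T$ is independent in $I_v(M_\omega)$ then every pair in $T$ is non-adjacent, so the corresponding entries of the switched E-matrix vanish and every triple sum in $T$ is trivially $0$; thus $T\in\Delta_{I_v(M_\omega)}=\Delta_\omega$. For $\subseteq$, given a nonempty face $F$ I would pick any $v\in F$ and use that $v$ is isolated in $I_v(M_\omega)$: for distinct $a,b\in F\setminus\{v\}$ the triple $\{a,b,v\}$ is a face, and since the entries incident to the isolated $v$ are $0$, the face condition collapses to $m_{ab}=0$, i.e.\ $a$ and $b$ are non-adjacent in $I_v(M_\omega)$. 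Therefore $F\setminus\{v\}$, and hence $F$ itself (adjoining the isolated vertex $v$ preserves independence), is an independent set of $I_v(M_\omega)$; the empty face lies in every such family trivially.

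Finally, the dimension formula follows at once: the maximum cardinality of a face equals $\max_{v\in[n]}\alpha(I_v(M_\omega))$, since every independent set of every $I_v(M_\omega)$ is a face and, conversely, each face is an independent set of some $I_v(M_\omega)$; subtracting $1$ gives the claim. I expect the only real subtlety to be the $\subseteq$ inclusion, where the key idea is that the isolated vertex $v$ degenerates the three-term face condition into the two-term adjacency condition, which is precisely what converts ``face of $\Delta_\omega$'' into ``independent set of $I_v(M_\omega)$''; the remaining triples of $F$ that avoid $v$ then require no separate argument, because pairwise non-adjacency already forces all of their triple sums to vanish.
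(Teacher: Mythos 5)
Your proposal is correct and follows essentially the same route as the paper: both arguments rest on the switching-invariance of the triple condition defining $\Delta_\omega$ (so that $\Delta_\omega=\Delta_{\omega'}$ when $M_{\omega'}=I_v(M_\omega)$) and on the key observation that the isolated vertex $v$ collapses the three-term face condition for $\{a,b,v\}$ to the pairwise non-adjacency of $a$ and $b$. The only difference is cosmetic: you verify the invariance of the cyclic sum under $\mu_v$ explicitly, which the paper leaves implicit.
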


\begin{proof}
Let $T \subset [n]$ be an independent set of $I_v(G(M_\omega))$ for some $v \in [n]$. Then $T$ is a face of $\Delta_\omega$ by definition. 

Let $F \in \Delta_\omega$ and fix $u \in F$. Our goal is to show that $F$ is an independent set of $I_u(G(M_\omega))$. 
Let $S_{\omega'}$ be a standard graded skew polynomial algebra at cube roots of unity with $G(M_{\omega'})=I_u(G(M_\omega))$. 
Then $\Delta_{\omega'}=\Delta_{\omega}$, so $F$ is also a face of $\Delta_{\omega'}$. 
For any distinct $a, b \in F\setminus\{u\}$, we see that $a$ is not adjacent to $b$ in $G(M_{\omega'})$ because $abu \subset F \in \Delta_{\omega'}$ and $u$ is an isolated vertex of $G(M_{\omega'})$. Therefore, $F$ is an independent set in $G(M_{\omega'})=I_u(G(M_\omega))$. 
\end{proof}

\begin{ex}
Let us consider $\Delta_\omega$ for $M_\omega$ given in Example~\ref{ex.simpcomp} (1) and (2), respectively: 

(1) According to the isolations provided there, we see that the maximal independent sets are as follows: 
\[I_1(G(M_\omega)): 123, 1456 \quad I_2(G(M_\omega)): 123, 23456 \quad I_4(G(M_\omega)): 1456, 23456\]
Hence, we have $\calF(\Delta_\omega)=\{23456,1456,123\}$ and $\dim\Delta_\omega=4$. 

(2) All isolations of $G(M_\omega)$ are as follows: 
\begin{align*}
&\xymatrix@R=1.0pc@C=2.0pc{
&1\\
&2 &3\ar[l]\ar[r]&4 \\
&5\ar[r]\ar[ur]&6\ar[u]&7\ar[l]\ar[ul]
}\quad
\xymatrix@R=1.0pc@C=2.0pc{
&1\ar[dr]\\
&2 &3\ar[dl]\ar[d]\ar[dr]&4 \\
&5\ar[r]&6&7\ar[l]
}\quad
\xymatrix@R=1.0pc@C=2.0pc{
&1\ar@/_12pt/[dd]\ar[ddr]\ar@/^12pt/[rrdd]\\
&2\ar[u]&3&4\ar@/_12pt/[ull]\\
&5\ar[r]\ar[u]&6\ar[lu]\ar[ru]&7\ar[l]\ar[u] 
}\\
&\xymatrix@R=1.0pc@C=2.0pc{
&1\\
&2\ar[r] &3\ar[lu]&4\ar[l] \\
&5&6\ar[u]\ar[lu]\ar[ru]\ar[luu]&7
}\quad
\xymatrix@R=1.0pc@C=2.0pc{
&1\ar@/_12pt/[dd]\ar@/^12pt/[rrdd]\\
&2\ar[r]\ar[d]\ar[rrd]&3\ar[lu]\ar[ld]\ar[rd]&4\ar[l]\ar[d]\ar[lld] \\
&5 &6 &7
}
\end{align*}
We see that the maximal independent sets are as follows: 
\begin{align*}
&I_1(G(M_\omega)): 12457, 13, 1246 \quad I_2(G(M_\omega)): 12457, 234, 1246 \quad I_3(G(M_\omega)): 13, 234, 357, 36 \\
&I_5(G(M_\omega)): 567, 12457, 357, 567 \quad I_6(G(M_\omega)): 1246, 36, 567
\end{align*}
Hence, we have $\calF(\Delta_\omega)=\{12457, 13, 1246, 234, 357, 36, 567\}$ and $\dim\Delta_\omega=4$. 
\end{ex}

\begin{rem}\label{rem:isomat}
For the purpose of this section, we focused on isolations of a digraph. 
However, it is also possible to introduce ``isolations of skew-symmetric matrices'' as in Proposition~\ref{prop:iso}. 
We can also regard a subset $T \subset [n]$ as an ``independent set'' if the subsquare matrix restricted to the rows and columns indexed by $T$ is $O$. 
Then we can apply similar proofs for Propositions~\ref{prop:isol1} and \ref{prop:isol2}, so the same statements also hold even in the case $\ell \geq 4$. 
\end{rem}

\begin{rem}\label{rem:isoalg}
In addition to Remark \ref{rem:isomat}, the notion of ``isolation'' can also be generalized algebraically as follows. 
For a given skew polynomial algebra $S_{\alpha}$ and $i \in [n]$, 
there exists a unique skew polynomial algebra $\widetilde{S_{\alpha}}$
such that $\GrMod \widetilde{S_{\alpha}}$ is equivalent to $\GrMod S_{\alpha}$ and $x_i$ is a central element of $\widetilde{S_{\alpha}}$. 
We can also regard a subset $T \subset [n]$ as an ``independent set'' if $x_i$ and $x_j$ are commutative for any $i,j \in T$.
Then we can apply similar proofs for Propositions~\ref{prop:isol1} and \ref{prop:isol2}, so the same statements also hold for the point simplicial complex $\Delta_{\alpha}$ for a skew polynomial algebra $S_{\alpha}$. 
\end{rem}

\section*{Acknowledgments}
The authors would like to thank Koichiro Tani for performing the computational experiments instead of them.
The authors would also like to thank the anonymous referee for carefully reading the paper and suggesting many improvements. 
The first author was supported by supported by JSPS KAKENHI Grant Number JP24K00521. 
The second author was supported by JSPS KAKENHI Grant Number JP22K03222.


\begin{thebibliography}{99}
\bibitem{AOU} T. Abdelgadir, S. Okawa, and K. Ueda,
Compact moduli of noncommutative projective planes,
preprint, \texttt{arXiv:1411.7770v1}.

\bibitem{AZ}
M. Artin and J. J. Zhang,
Noncommutative projective schemes,
\textit{Adv. Math.} \textbf{109} (1994), no. 2, 228--287.

\bibitem{ATV}
M. Artin, J. Tate, and M. Van den~Bergh,
Some algebras associated to automorphisms of elliptic curves,
\textit{The Grothendieck Festschrift, Vol. 1}, 33--85,
Progr. Math., 86, Birkh\"auser, Boston 1990.

\bibitem{ATV2}
M. Artin, J. Tate, and M. Van den~Bergh,
Modules over regular algebras of dimension 3,
\textit{Invent. Math.} \textbf{106} (1991), no.2, 335--388.

\bibitem{BDL}
P. Belmans, K. De~Laet, and L. Le~Bruyn,
The point variety of quantum polynomial rings,
\textit{J. Algebra} \textbf{463} (2016), 10--22.

\bibitem{CGt}
A. Conner and P. Goetz,
Quantum projective planes as certain graded twisted tensor products,
\textit{J. Algebra} \textbf{620} (2023), 293--343.

\bibitem{CG}
N. Cooney and J. E. Grabowski,
Automorphism groupoids in noncommutative projective geometry,
\textit{J. Algebra} \textbf{604} (2022), 296--323.

\bibitem{CW}
Y. Cheng and A. L. Wells Jr.,
Switching classes of directed graphs,
\textit{J. Combin. Theory Ser. (B)} \textbf{40} (1986), no. 2, 169--186. 

\bibitem{G}
J. Gaddis, The isomorphism problem for quantum affine spaces, homogenized quantized Weyl algebras, and quantum matrix algebras,
\textit{J. Pure Appl. Algebra} \textbf{221} (2017), no. 10, 2511--2524.

\bibitem{HU}
A. Higashitani and K. Ueyama,
Combinatorial classification of ($\pm 1$)-skew projective spaces,
\textit{Q. J. Math.} \textbf{74} (2023), no. 3, 939--956.

\bibitem{IM}
A. Itaba and M. Matsuno,
Defining relations of 3-dimensional quadratic AS-regular algebras,
\textit{Math. J. Okayama Univ.} \textbf{63} (2021), 61--86.

\bibitem{MS}
C. L. Mallows and N. J. A. Sloane, 
Two-graphs, switching classes and Euler graphs are equal in number,
\textit{SIAM J. Appl. Math.} \textbf{28} (1975), 876--880.

\bibitem{Ma}
M. Matsuno,
A complete classification of 3-dimensional quadratic AS-regular algebras of type EC,
\textit{Canad. Math. Bull.} \textbf{64} (2021), no.1, 123--141.

\bibitem{Mss} 
I. Mori,
Noncommutative projective schemes and point schemes,
\textit{Algebras, Rings and Their Representations}, 215--239,
World Scientific Publishing Co. Pte. Ltd., Hackensack, NJ, 2006. 

\bibitem{Mcp}
I. Mori,
Co-point modules over Koszul algebras,
\textit{J. London Math. Soc. (2)} \textbf{74} (2006), no. 3, 639--656.

\bibitem{MU}
I. Mori and K. Ueyama,
Graded Morita equivalences for geometric AS-regular algebras,
\textit{Glasg. Math. J.} \textbf{55} (2013), no. 2, 241--257.

\bibitem{NVZ}
M. Nafari, M. Vancliff, and J. Zhang,
Classifying quadratic quantum $\mathbb{P}^2$s by using graded skew Clifford algebras,
\textit{J. Algebra} \textbf{346} (2011), 152--164.

\bibitem{OE}
OEIS Foundation Inc.,
The on-line encyclopedia of integer sequences,
\href{https://oeis.org/}{\texttt{https://oeis.org/}}.

\bibitem{R}
D. Rogalski,
Artin-Schelter regular algebras,
\textit{Recent Advances in Noncommutative Algebra and Geometry}, 195--241,
Contemp. Math., 801, Amer. Math. Soc., Providence, 2024.

\bibitem{S} 
J. J. Seidel,
Graphs and two-graphs,
\textit{Proceedings of the Fifth Southeastern Conference on Combinatorics, Graph Theory and Computing (Florida Atlantic Univ., Boca Raton, Fla., 1974)},
125--143, Congress. Numer., No. X, Utilitas Mathematica Publishing, Inc., Winnipeg, MB, 1974.

\bibitem{Si}
S. J. Sierra,
$G$-algebras, twistings, and equivalences of graded categories,
\textit{Algebr. Represent. Theory} \textbf{14} (2011), no. 2, 377--390. 

\bibitem{vLS}
J. H. van~Lint and J. J. Seidel,
Equilateral point sets in elliptic geometry,
\textit{Nederl. Akad. Wetensch. Proc. Ser. A} \textbf{69} = \textit{Indag. Math.} \textbf{28} (1966) 335--348.

\bibitem{Vi}
J. Vitoria,
Equivalences for noncommutative projective spaces,
preprint, \texttt{arXiv:1001.4400v3}.

\bibitem{Z}
J. J. Zhang,
Twisted graded algebras and equivalences of graded categories,
\textit{Proc. London Math. Soc. (3)} \textbf{72} (1996), no. 2, 281--311.
\end{thebibliography}
\end{document}